\newtheorem{thm}{Theorem}[section]
\newtheorem{prop}[thm]{Proposition}
\newtheorem{example}[thm]{Example}
\numberwithin{equation}{section} \topmargin=-2cm \oddsidemargin=1cm
\begin{document}
\baselineskip=1.5pc
\title{\textbf{Well-balanced fifth-order finite difference Hermite WENO scheme for the shallow water equations}}
\author{
Zhuang Zhao\footnote{School of Mathematical Sciences and Institute of Natural Sciences, Shanghai Jiao Tong University, Shanghai, 200240, China. E-mail: zzhao-m@sjtu.edu.cn. The research of this author is partially supported by the Postdoctoral Science Foundation of China (Grant Nos. 2021M702145). }
~and~
Min Zhang\footnote{School of Mathematical Sciences, Peking University, Beijing, 100871, China. E-mail: minzhang@math.pku.edu.cn. The research of this author is partially supported by the Postdoctoral Science Foundation of China (Grant Nos. 2022M710229).}}

\date{}

\maketitle

\begin{abstract}
In this paper, we propose a well-balanced fifth-order finite difference Hermite WENO (HWENO) scheme for the shallow water equations with non-flat bottom topography in pre-balanced form.
For achieving the well-balance property, we adopt the similar idea of WENO-XS scheme [Xing and Shu, J. Comput. Phys., 208 (2005), 206-227.] to balance the flux gradients and the source terms.
The fluxes in the original equation are reconstructed by the nonlinear HWENO reconstructions while other fluxes in the derivative equations are approximated by the high-degree polynomials directly.
And an HWENO limiter is applied for the derivatives of equilibrium variables in  time discretization step to control spurious oscillations which maintains the well-balance property. Instead of using a five-point stencil in the same fifth-order WENO-XS scheme, the proposed HWENO scheme only needs a compact three-point stencil in the reconstruction.
Various benchmark examples in one and two dimensions are presented to show the HWENO scheme is fifth-order accuracy, preserves steady-state solution, has better resolution, is more accurate and efficient, and is essentially non-oscillatory.
\end{abstract}


\vspace{5pt}

\noindent\textbf{Keywords:}
well-balanced, Hermite WENO scheme, finite difference, compactness, shallow water equations

\normalsize \vskip 0.2in
\newpage

\section{Introduction}
In this paper, we are interested in designing a high-order finite difference Hermite weighted essentially non-oscillatory (HWENO) scheme for shallow water equations with non-flat bottom topography.
The shallow water equations (SWEs), also referred to as the Saint-Venant system, model
the water flow over a surface. It plays an important role
in the ocean and hydraulic engineering, such as hydraulic jumps/shocks, open-channel flows,
bore wave propagation, tidal flows in the estuary and coastal zones.
The SWEs in conservative form are read as
\begin{equation}\label{swe-form}
\frac{\partial}{\partial t}
\begin{bmatrix*}[l]
  h\\
  hu\\
  hv\\
\end{bmatrix*}
+\frac{\partial}{\partial x}
\begin{bmatrix*}[c]
hu\\
 hu^2+\frac{1}{2}gh^2\\
huv\\
\end{bmatrix*}
+\frac{\partial}{\partial y}
\begin{bmatrix*}[c]
 hv\\
 huv\\
hv^2+\frac{1}{2}gh^2\\
\end{bmatrix*}
=
\begin{bmatrix*}[c]
  0\\
  -ghb_x\\
  - ghb_y\\
\end{bmatrix*},
\end{equation}
where $h(x,y,t)\geq0$ is the depth of water, $(hu, hv)$ are the discharges,
$(u,v)$ are the velocities,
$b = b(x,y)$ is the bottom topography assumed to be a given time-independent function, and
$g$ is the gravitation acceleration.

The homogeneous SWEs are equivalent to that of the isentropic Euler equations. However, the properties of the SWEs change a lot due to the presence of the source term. A distinct feature of the SWEs is that they admit steady-state solutions when the flux gradients are balanced by the source term exactly.
Of particular interest is the steady-states corresponding to still water (also called ``lake-at-rest"),
\begin{equation}\label{swe-case1}
 u=0,\quad v=0,\quad h+b = Const.
\end{equation}
It is crucial that this solution is preserved by numerical methods for the SWEs.
Indeed, many physical phenomena, such as waves on a lake or tsunami waves in the deep ocean, can be described as small perturbations of this lake-at-rest steady-state.
Traditional numerical schemes with a straight-forward handling of the source term cannot balance the effect of the source term and the flux. They are difficult to capture the steady-state well numerically unless the method can preserve the steady-state solution. This property is known as the C-property or well-balance property.

The concept of the ``exact C-property" was first introduced by Bermudez and Vazquez \cite{Bermudez-Vazquez-1994} in 1994. Since then, many well-balanced schemes have been developed for the SWEs.
For example, LeVeque \cite{LeVeque-1998JCP} developed a quasi-steady wave propagation algorithm which introduced a Riemann problem in the center of each grid cell such that the flux difference exactly cancels the source term.
Zhou et al. \cite{Zhou-etal-2001JCP} proposed a surface gradient method for the treatment of the source terms based on an accurate reconstruction of the conservative variables at cell interfaces.
Audusse et al. \cite{Audusse-etal-2004Siam} designed a second-order well-balanced scheme in terms of a hydrostatic reconstruction idea.
Tang et al. \cite{TangTX-2004} extended a Kinetic Flux Vector Splitting (KFVS) scheme to solve the SWEs with source terms.
Vukovic and Sopta \cite{Vukovic-Sopta-2002JCP} proposed the finite difference ENO and WENO schemes with the source term decomposed, where the ENO and WENO reconstruction are applied to both the flux and the source term.
Xing and Shu designed the fifth-order well-balanced finite difference \cite{Xing-Shu-2005JCP} based on a special decomposition of the source term, then, they also designed the well-balanced finite volume WENO scheme and DG method \cite{Xing-Shu-2006CiCP} for a class of hyperbolic balance laws including SWEs based on the hydrostatic reconstruction \cite{Audusse-etal-2004Siam}.
Caleffi \cite{Caleffi-2011} developed a well-balanced fourth-order finite volume Hermite WENO scheme for the one-dimensional SWEs on the basis of \cite{QS-HWENO-2004}.
For more related well-balanced high-order methods, e.g., finite difference schemes \cite{ChengLCS-2022,GaoH-2017-FD,HuangXX-2022,Li-DonGao-2020,LuQiu-2011JSC,Wang-etal-2020},
finite volume schemes \cite{Capilla-Balaguer-2013,Li-Lu-Qiu-2012JSC,NoelleXS-2007,Xing-Shu-2006JCP},
and DG methods \cite{Li-etal-2018JCAM,Xing-Zhang-Shu-2010,Xing-Zhang-2013JSC,Zhang-Huang-Qiu-2021JSC,Zhang-Huang-Qiu-2020CiCP,Zhang-Xia-Xu-2021JSC}.

The HWENO schemes were first introduced by Qiu and Shu \cite{QS-HWENO-2004,QS-HWENO-2005} for solving hyperbolic conservation laws, which evolve both the function value and derivative of the governing variables in time, rather than only evolving the conservative variables in the WENO scheme \cite{js}. Since then, many HWENO schemes have been developed for hyperbolic conservation laws \cite{Capdeville,LiMRHW1,LiuQ1,MW,TLQ,ZA,weHTENO}.
The main advantage of the HWENO scheme is the compactness in the spatial reconstruction, which allows easier treatment for the boundary conditions and internal interfaces.
And the numerical results also show that the HWENO schemes are more accurate and have better resolution near discontinuities or internal interfaces than the same order traditional WENO schemes.
Motivated by these good properties, we devote to designing a well-balanced HWENO scheme to solve the SWEs.
One work \cite{Caleffi-2011} has been done in using HWENO scheme to solve the SWEs with non-flat bottom topography, in which Caleffi \cite{Caleffi-2011} developed a well-balanced finite volume HWENO method in one-dimensional case, but it only has the fourth-order accuracy and loses the fifth-order accuracy of the original HWENO scheme \cite{QS-HWENO-2004}.
Drawback of the finite volume HWENO scheme \cite{QS-HWENO-2004} is it cannot be extended to two dimensions straightforwardly by dimension-by-dimension manner.

In this paper, we generalize the finite difference HWENO scheme \cite{ZhaoZhuang-2020JSC-FD} to obtain a fifth-order well-balanced HWENO scheme for the one- and two-dimensional SWEs.
For achieving the well-balance property, we use the similar idea of the fifth-order finite difference WENO (WENO-XS) scheme \cite{Xing-Shu-2005JCP} to balance the flux gradients and the source terms in the spatial discretization step. The fluxes in the original equation are reconstructed by nonlinear HWENO reconstructions while other fluxes in the derivative equations are approximated by high-degree polynomials directly. To control spurious oscillations, an HWENO limiter is used to modified the derivative of the equilibrium variables in time discretization step, and the limiter procedure preserves the steady-state solution.

It is worth pointing out that although the HWENO scheme needs to solve the derivative equations which adds extra computational costs into the algorithm, the HWENO scheme is more efficient than the WENO-XS scheme in the sense that the former leads to a smaller error than the latter for a fixed amount of the CPU time (cf. Example \ref{test5-1d} and Example \ref{test1-2d} in \S\ref{sec:numerical}).
In addition, the proposed HWENO scheme is more compact than the WENO-XS scheme \cite{Xing-Shu-2005JCP} in the reconstruction.
To be specific, the HWENO scheme only needs a compact three-point stencil while the WENO-XS scheme needs a five-point stencil in the reconstruction for achieving fifth-order accuracy.
The numerical results in one and two dimensions show that the proposed HWENO scheme is efficient, has better resolution, keeps non-oscillatory, preserves steady-state solution, and has fifth-order accuracy in smooth regions.

The remainder of the paper is organized as follows.
A well-balanced fifth-order finite difference HWENO scheme for the one-dimensional SWEs is developed in \S\ref{sec:WBFDHWENO-1d}, and the extension to two dimensions by dimension-by-dimension manner is described in \S\ref{sec:WBFDHWENO-2d}.
One- and two-dimensional numerical results are presented in \S\ref{sec:numerical} to show the well-balance property, accuracy, efficiency, resolution, and non-oscillation of the proposed HWENO scheme.
Finally, the conclusions and further comments are given in \S\ref{sec:conclusions}.

\section{Well-balanced HWENO scheme for 1D SWEs}
\label{sec:WBFDHWENO-1d}

In this section, we present a well-balanced fifth-order finite difference HWENO scheme for solving the one-dimensional SWEs with non-flat bottom topography.
Comparing with the WENO-XS scheme \cite{Xing-Shu-2005JCP}, the HWENO scheme not only uses the values of solution but also evolves its first-order derivative, which is more compact in the spatial discretization.
To be specific, the proposed fifth-order HWENO scheme only needs a compact three-point stencil while the fifth-order WENO-XS scheme needs a five-point stencil in the reconstruction. The compactness of the scheme not only allows easier treatment of the boundary conditions and the internal interfaces, but also has better resolution near discontinuities or internal interface with less transition points.

We consider the one-dimensional SWEs as
\begin{equation}\label{swe-form-1d}
\frac{\partial}{\partial t}
\begin{bmatrix*}[l]
  h\\
  hu\\
\end{bmatrix*}
+\frac{\partial}{\partial x}
\begin{bmatrix*}[c]
 hu\\
hu^2+\frac{1}{2}gh^2\\
\end{bmatrix*}
=
\begin{bmatrix*}[c]
  0\\
  -ghb_x\\
\end{bmatrix*},
\end{equation}
where $h(x,t)\geq0$ is the depth of water,
$u$ is the velocity,
$b = b(x)$ is the bottom topography assumed to be a given time-independent function, and
$g$ is the gravitation acceleration. The still water steady-state solution of $\eqref{swe-form-1d}$ is
\begin{equation}\label{st-1d}
h+b = Const,\quad m=hu=0.
\end{equation}

Let $\bm{U} = (\eta = h+b,m=hu)^{T}$ and $\tilde{\bm{b}} = (0,b)^{T}$, and rewrite \eqref{swe-form-1d} into pre-balanced form as
\begin{equation}\label{swe-form-1d-e}
\begin{split}
&\frac{\partial \bm{U}}{\partial t}
+\frac{\partial}{\partial x}
\mathbf{F}(\bm{U},b)
= \bm{S}(\bm{U},b),\\
&\mathbf{F}(\bm{U},b) =\begin{bmatrix*}[c]
 m\\
  \frac{m^2}{\eta-b}+\frac{1}{2}g\big(\eta^2-2\eta b)\\
\end{bmatrix*}
,\quad \bm{S}(\bm{U},b)=
\begin{bmatrix*}[c]
  0\\
  -g\eta b_x\\
\end{bmatrix*} = -g\eta \tilde{\bm{b}}_x.
\end{split}
\end{equation}
%

To construct the HWENO scheme, we take partial
derivative w.r.t. $x$ on both sides of \eqref{swe-form-1d-e} and denote $ \bm{V} = (H,M)^{T}=(\eta_x,m_x)^{T}$ and $\widetilde{\bm{B}} = (0,B)^{T}= (0,b_x)^{T}=\tilde{\bm{b}}_x$. Then we have
\begin{subequations}\label{swe-form-1d-h}
\begin{align}
&\frac{\partial \bm{U}}{\partial t}
+\frac{\partial}{\partial x}\mathbf{F}(\bm{U},b)
= -g\eta \tilde{\bm{b}}_x, \label{swe-form-1d-hU}
\\
&\frac{\partial \bm{V}}{\partial t}
+\frac{\partial}{\partial x}\mathbf{\Phi}(\bm{U},b;\bm{V},B)
=-gH\tilde{\bm{b}}_x-g\eta\widetilde{\bm{B}}_x, \label{swe-form-1d-hV}
\end{align}
\end{subequations}
where
\begin{equation*}\label{swe-form-1d-h2}
\begin{split}
&\mathbf{\Phi}=\begin{bmatrix*}[c]
  M\\
   \big(g(\eta-b)-\frac{m^2}{(\eta-b)^2}\big)H
   +\frac{2m}{\eta-b}M
   +\big(\frac{m^2}{(\eta-b)^2}-g\eta\big)B\\
\end{bmatrix*}=\frac{\partial \mathbf{F}}{\partial \bm{U}}\bm{V} + \frac{\partial\mathbf{F}}{\partial \tilde{\bm{b}}}\widetilde{\bm{B}}.
\end{split}
\end{equation*}

\subsection{A balance of the flux and source term}

Following the idea of \cite{Xing-Shu-2005JCP}, we would like to find a linear scheme for the 1D system \eqref{swe-form-1d-h}, where the linear scheme represents as that the spatial derivatives in \eqref{swe-form-1d-hU} and \eqref{swe-form-1d-hV} are approximated by the linear finite difference operators $\mathcal{D}_1$ and $\mathcal{D}_2$, respectively.

\begin{prop}
Linear scheme for the 1D system \eqref{swe-form-1d-h} satisfying the still water steady-state solution \eqref{st-1d} can maintain the well-balance property.
\end{prop}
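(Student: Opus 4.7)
The plan is to substitute the still water steady state directly into the semi-discretized forms of \eqref{swe-form-1d-hU} and \eqref{swe-form-1d-hV} and verify that both spatial residuals vanish. The key structural observation is that in the pre-balanced variables $\bm{U} = (\eta, m)^{T}$ and $\bm{V} = (H, M)^{T}$, every nontrivial contribution of the flux and the source term at the lake-at-rest state collapses to an expression of the form ``constant times $b$ (or $b_x$)'', so linearity of the finite-difference operators $\mathcal{D}_{1}$ and $\mathcal{D}_{2}$ forces the required cancellation, provided the source term is discretized by the \emph{same} operator as the corresponding flux derivative.

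First I will handle \eqref{swe-form-1d-hU}. Plugging $\eta \equiv \text{Const}$ and $m \equiv 0$ into $\mathbf{F}(\bm{U}, b)$, the first component is $0$ and the second reduces to $\tfrac{1}{2}g\eta^{2} - g\eta b$. Applying $\mathcal{D}_{1}$, using linearity and the fact that any consistent derivative approximation annihilates constants, I get $\mathcal{D}_{1}\mathbf{F} = (0,\, -g\eta\,\mathcal{D}_{1} b)^{T}$. On the source side, $-g\eta\tilde{\bm{b}}_{x}$ is discretized as $-g\eta\,\mathcal{D}_{1}\tilde{\bm{b}} = (0,\, -g\eta\,\mathcal{D}_{1} b)^{T}$. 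These agree componentwise, so $-\mathcal{D}_{1}\mathbf{F} + \bm{S} = 0$, which is consistent with $\partial\bm{U}/\partial t = 0$ at the steady state.

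Next I will handle \eqref{swe-form-1d-hV}. At the still water state we additionally have $\bm{V} = (H, M)^{T} = (0, 0)^{T}$, so $\mathbf{\Phi}$ collapses to $(0,\, -g\eta b_{x})^{T}$ and the right-hand side collapses to $(0,\, -g\eta b_{xx})^{T}$, because the term $-gH\tilde{\bm{b}}_x$ vanishes. Applying $\mathcal{D}_{2}$ and again pulling the constant $\eta$ outside, I obtain $\mathcal{D}_{2}\mathbf{\Phi} = (0,\, -g\eta\,\mathcal{D}_{2} b_{x})^{T}$, which matches the discretization $-g\eta\,\mathcal{D}_{2}\widetilde{\bm{B}}$ of the source term whenever the same linear operator is used on both sides. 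Hence the spatial residual of the derivative equation vanishes as well, and $\partial\bm{V}/\partial t = 0$. An SSP Runge-Kutta update in time then preserves the steady state exactly.

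The argument is essentially a bookkeeping exercise and there is no substantive obstacle; the only point requiring care is the convention that source terms and flux derivatives are discretized with the \emph{same} linear operator. This is precisely the source-term decomposition strategy of Xing and Shu \cite{Xing-Shu-2005JCP}, and it explains why passing to the pre-balanced variables is decisive: after the rewriting, the lake-at-rest residual is linear in $b$ with a constant coefficient, so consistency and linearity of $\mathcal{D}_{1}, \mathcal{D}_{2}$ are all that is needed to close the proof.
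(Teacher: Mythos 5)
Your proof is correct and follows essentially the same route as the paper: substitute the lake-at-rest state $\eta=\mathrm{Const}$, $m=0$, $H=M=0$ into both equations, observe that the flux residuals reduce to $\mathcal{D}_1\bigl(\tfrac12 g\eta^2 - g\eta b\bigr)$ versus $-g\eta\,\mathcal{D}_1(b)$ and $\mathcal{D}_2(-g\eta B)$ versus $-g\eta\,\mathcal{D}_2(B)$, and invoke linearity of $\mathcal{D}_1,\mathcal{D}_2$ (plus annihilation of constants) to cancel them. The only difference is cosmetic: the paper writes the cancellation as a single residual computation, whereas you match the two sides term by term and add a remark about the Runge--Kutta update, which is not needed for the semi-discrete statement.
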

\begin{proof}
For the still water steady-state solution \eqref{st-1d}, we have
\begin{equation}\label{s-1d}
\eta=h+b = Const,\quad m=hu=0, \quad H = \eta_x=0,\quad M=m_x=0.
\end{equation}
Then, the residues of \eqref{swe-form-1d-hU} and \eqref{swe-form-1d-hV} will be reduced as
\begin{equation*}\label{swe-form-1d-res-D}
\begin{split}
\mathbf{R} &=\begin{bmatrix*}[c]
  0\\
  -g\eta \mathcal{D}_1(b)
\end{bmatrix*} -
\begin{bmatrix*}[c]
0\\
\mathcal{D}_1\big(\frac{1}{2}g\big(\eta^2-2\eta b)\big)\\
\end{bmatrix*}
=
\begin{bmatrix*}[c]
0\\
\mathcal{D}_1\big(-\frac{1}{2}g\eta^2)\big)\\
\end{bmatrix*} = 0,
\\
\mathbf{R}_x &=\begin{bmatrix*}[c]
    0\\
-g\eta \mathcal{D}_2(B)\\
\end{bmatrix*} -
\begin{bmatrix*}[c]
   0\\
   \mathcal{D}_2\big(-g\eta B\big)
\end{bmatrix*}
=
\begin{bmatrix*}[c]
0\\
-\mathcal{D}_2\big(g\eta B-g\eta B\big)
\end{bmatrix*} = 0.
\end{split}
\end{equation*}
\end{proof}

\vspace{10pt}

Generally, the fifth-order finite difference HWENO scheme \cite{ZhaoZhuang-2020JSC-FD} is nonlinear.
The nonlinearity comes from the nonlinear weights, which in turn comes from the nonlinearity of the smooth indicators measuring the smoothness of the flux functions.
Following \cite{Xing-Shu-2005JCP}, we make minor modifications for the fifth-order finite difference HWENO scheme \cite{ZhaoZhuang-2020JSC-FD} to make them keep well-balance property, accuracy and nonlinear stability, simultaneously.

Now, we describe the procedure of the well-balanced fifth-order finite difference HWENO scheme, which is to balance the flux gradients and the source terms in the spatial discretization step, for the 1D SWEs.


\textbf{Step 1.} Discretize the flux gradients $\big(\frac{\partial\mathbf{F}}{\partial x},\frac{\partial\mathbf{\Phi}}{\partial x}\big)$ as
\begin{equation}\label{swe-form-1d-dx}
\begin{split}
&\frac{\partial\mathbf{F}}{\partial x}\approx \frac{\widehat{\mathbf{F}}_{i+\frac12}-\widehat{\mathbf{F}}_{i-\frac12}}{\Delta x},
\qquad
\frac{\partial\mathbf{\Phi}}{\partial x}\approx \frac{\widehat{\mathbf{\Phi}}_{i+\frac12}-\widehat{\mathbf{\Phi}}_{i-\frac12}}{\Delta x},
\end{split}
\end{equation}
where the numerical fluxes $\widehat{\mathbf{F}}_{i+\frac12}$ and $\widehat{\mathbf{\Phi}}_{i+\frac12}$ are the fifth-order approximation of flux functions $\mathbf{F}(\bm{U},b)$ and $\mathbf{\mathbf{\Phi}}(\bm{U},b;\bm{V},B)$, respectively, at the boundary $x_{i+\frac12}$ of the cell $I_i$.
For the stability requirement, we should split the fluxes $\mathbf{F}(\bm{U},b)$ and $\mathbf{\Phi}(\bm{U},b;\bm{V},B)$ into two parts for the upwinding mechanism as
\begin{equation}
\begin{split}
&\mathbf{F} = \mathbf{F}^{+}+\mathbf{F}^{-},\quad \mathbf{F}^{\pm} =
\frac{1}{2}\big(\mathbf{F}\pm \alpha \bm{U}\big),\\
&\mathbf{\Phi} = \mathbf{\Phi}^{+}+\mathbf{\Phi}^{-}, \quad
\mathbf{\Phi}^{\pm} = \frac{1}{2}\big(\mathbf{\Phi}\pm \alpha \bm{V}\big).
\end{split}
\end{equation}

In this work, the numerical fluxes $\widehat{\mathbf{F}}_{i+\frac12}^{\pm}$ are reconstructed by the nonlinear HWENO scheme using the local characteristic variables
while $\widehat{\mathbf{\Phi}}_{i+\frac12}^{\pm}$ are approximated by high-degree polynomials on each component directly.
We now describe the procedure of the fifth-order finite difference HWENO reconstruction \cite{ZhaoZhuang-2020JSC-FD} for $\widehat{\mathbf{F}}_{i+\frac12}^{\pm}$ and the linear approximation for $\widehat{\mathbf{\Phi}}_{i+\frac12}^{\pm}$.
Without loss generality, we here only describe the detailed HWENO reconstruction of $\hat f_{i+\frac12}$ and $\hat \phi_{i+\frac12}$ for $f'(u)\geq 0$ in the scalar case, where $\phi=f(u)_x$.
The procedure for the case of $f'(u)<0$ is the mirror symmetric with respect to $x_{i+\frac12}$.

Firstly, we choose a set of suitable candidate stencils and construct polynomials based on Hermite reconstruction on these stencils.
To be specific, we choose a big stencil $\mathcal{T} = \{x_{i-1},x_i,x_{i+1}\}$
and three small stencils $\mathcal{T}_1 = \{x_{i-1},x_i\}$,
$\mathcal{T}_2 = \{x_i,x_{i+1}\}$
and $\mathcal{T}_3 = \{x_{i-1},x_i,x_{i+1}\}$. 
Using the Hermite reconstruction on stencils $\mathcal{T}$, $\mathcal{T}_1$, $\mathcal{T}_2$  and $\mathcal{T}_3$, respectively, there are a unique fifth-degree polynomial $Q(x)$ and three cubic polynomials $q_1(x)$, $q_2(x)$, and $q_3(x)$, such that
\begin{equation}\label{HWEp1}
\begin{split}
&Q(x):~\begin{cases}
\frac{1}{\Delta x} \int_{I_{i+\ell}} q_0(x)dx = f_{i+\ell},\quad \ell=-1,0,1, \\ \frac{1}{\Delta x} \int_{I_{i+\ell}} q'_0(x)dx = \phi_{i+\ell},\quad\ell=-1,0,1,
\end{cases}
\\&q_1(x):~\begin{cases}
\frac{1}{\Delta x} \int_{I_{i+\ell}} q_1(x)dx = f_{i+\ell},\quad \ell=-1,0, \\
\frac{1}{\Delta x} \int_{I_{i+\ell}} q'_1(x)dx = \phi_{i+\ell},\quad\ell=-1,0,
\end{cases}
\\&q_2(x):~\begin{cases}
\frac{1}{\Delta x} \int_{I_{i+\ell}} q_2(x)dx = f_{i+\ell},\quad \ell=0,1, \\
\frac{1}{\Delta x} \int_{I_{i+\ell}} q'_2(x)dx = \phi_{i+\ell},\quad\ell=0,1,
\end{cases}
\\&q_3(x):~\begin{cases}
\frac{1}{\Delta x} \int_{I_{i+\ell}} q_3(x)dx = f_{i+\ell},\quad \ell=-1,0,1, \\
\frac{1}{\Delta x}\int_{I_{i}}~~ q'_3(x)dx = \phi_{i},
\end{cases}
\end{split}
\end{equation}
where $f_{i+\ell} = f(u_{i+\ell})$, $\phi_{i+\ell} = \phi(u_{i+\ell},v_{i+\ell})$, $\ell=-1,0,1$.
Evaluate the function values and the derivatives of these polynomials at the point $x_{i+\frac12}$, then, we have
\begin{equation*}
\begin{split}
q_1(x_{i+\frac12})&=\frac12\big(f_{i-1}+f_i\big)
+\frac{\Delta x}{6}\big(\phi_{i-1}+5\phi_i\big),\\
q_2(x_{i+\frac12})&=\frac12\big(f_{i}+f_{i+1}\big)
+\frac{\Delta x}{6}\big(\phi_{i}-\phi_{i+1}\big),\\
q_3(x_{i+\frac12})&=\frac{1}{12}\big(f_{i-1}+10f_{i}+f_{i+1}\big)
+\frac{\Delta x}{ 2}\phi_i,
\end{split}
\end{equation*}
and
\begin{equation*}
\begin{split}
Q'(x_{i+\frac12})&=\frac{1}{4\Delta x}\big(f_{i-1}-8f_i+7f_{i+1}\big)
+\frac{1}{12}\big(\phi_{i-1}-2\phi_i-5\phi_{i+1}\big).\\
\end{split}
\end{equation*}
Finally, the values of $\hat{f}_{i+\frac12}$ and $\hat h_{i+\frac12}$ are reconstructed by
\begin{align}
&\hat{f}_{i+\frac12} =\omega_1 q_1(x_{i+\frac12}) + \omega_{2} q_{2}(x_{i+\frac12})+ \omega_{3} q_{3}(x_{i+\frac12}),\label{Rec-f}
\\&\hat{\phi}_{i+\frac12} =Q'(x_{i+\frac12}). \label{Rec-phi}
\end{align}
Here, the nonlinear weights $\omega_\ell$ satisfy $\omega_\ell\geq 0$, $\omega_1+\omega_2+\omega_3=1$, defined as
\begin{equation}\label{GHYZZ}
\begin{split}
\omega_\ell=\frac{\tilde\omega_\ell}{\sum_{\ell=0}^{2}\tilde\omega_{\ell}},\quad\quad \tilde\omega_{\ell}=\frac{\gamma_{\ell}}{(\beta_{\ell}+\epsilon)^2},\quad \ell=1,2,3,
\end{split}
\end{equation}
where $\gamma_1=\frac{3}{10}$, $\gamma_2=\frac{3}{10}$, $\gamma_3=\frac{4}{10}$ are the linear weights, and $\epsilon$ is a small positive parameter to avoid the denominator by zero, taken as  $10^{-6}$ in the computation as the WENO-XS scheme \cite{Xing-Shu-2005JCP}, unless otherwise stated.
$\beta_\ell$ are the smoothness indicators which measure how smooth the polynomials $p_{\ell}(x),~\ell=1,2,3$ are in the target cell $I_i$.
The explicit formulas are given by
\begin{align*}\small
\beta_1=(\Delta x\phi_{i})^2
&+\frac{13}{3}\big( 3(f_{i-1}- f_{i}) + \Delta x(\phi_{i-1}+2\phi_{i})\big)^2
\\&+\frac{781}{20}\big( 2(f_{i-1}- f_{i}) + \Delta x(\phi_{i-1}+\phi_{i})\big)^2,\\
\beta_2=(\Delta x\phi_{i})^2
&+\frac{13}{3}\big( 3(f_{i}- f_{i+1}) + \Delta x(\phi_{i+1}+2\phi_{i})\big)^2
\\&+\frac{781}{20}\big( 2(f_{i}- f_{i+1}) + \Delta x(\phi_{i+1}+\phi_{i})\big)^2,\\
\beta_3=(\Delta x\phi_{i})^2
&+\frac{13}{12}(f_{i-1}-2 f_i+ f_{i+1})^2
\\&+\frac{781}{80}\big( f_{i-1}- f_{i+1} + 2\Delta x\phi_{i}\big)^2.
\end{align*}

\textbf{Step 2.}
Discretize the spatial derivatives $\big(\tilde{\bm{b}}_x,\widetilde{\bm{B}}_x\big)$ in source terms as
\begin{equation}\label{swe-form-1d-Sdx}
\begin{split}
&\tilde{\bm{b}}_x\approx \frac{\hat{\tilde{\bm{b}}}_{i+\frac12}-\hat{\tilde{\bm{b}}}_{i-\frac12}}{\Delta x},\quad
\widetilde{\bm{B}}_x\approx \frac{\widehat{\widetilde{\bm{B}}}_{i+\frac12}-\widehat{\widetilde{\bm{B}}}_{i-\frac12}}{\Delta x},
\end{split}
\end{equation}
where $\hat{\tilde{\bm{b}}}_{i+\frac12}$ and $\widehat{\widetilde{\bm{B}}}_{i+\frac12}$ are the fifth-order approximation of the values $\tilde{\bm{b}}(x_{i+\frac12})$ and $\widetilde{\bm{B}}(x_{i+\frac12})$ on the cell of $I_{i}$, respectively.
To obtain the well-balanced property, we split the derivative terms of the source terms \eqref{swe-form-1d-h} as the following forms,
\begin{equation}
\begin{split}
&\tilde{\bm{b}} =\tilde{\bm{b}}^{+} + \tilde{\bm{b}}^{-},\quad \quad\tilde{\bm{b}}^{\pm} = \frac{1}{2}\bm{b} ,
\\&\widetilde{\bm{B}} = \widetilde{\bm{B}}^{+} + \widetilde{\bm{B}}^{-},\quad
\widetilde{\bm{B}}^{\pm}=\frac{1}{2}\widetilde{\bm{B}}.
\end{split}
\end{equation}
Then we use the same HWENO approximation, the same local characteristic decomposition and the same nonlinear weights of  $\big(\widehat{\mathbf{F}}^{+}_{i+\frac12},\widehat{\mathbf{\Phi}}^{+}_{i+\frac12}\big)$ and $\big(\widehat{\mathbf{F}}^{-}_{i+\frac12},\widehat{\mathbf{\Phi}}^{-}_{i+\frac12}\big)$ to approximate $\big(\hat{\tilde{\bm{b}}}^{+}_{i+\frac12},\widehat{\widetilde{\bm{B}}}^{+}_{i+\frac12}\big)$ and $\big(\hat{\tilde{\bm{b}}}^{-}_{i+\frac12},\widehat{\widetilde{\bm{B}}}^{-}_{i+\frac12}\big)$, respectively.
Thus, the semi-discrete finite difference HWENO scheme is, for any $i = 1,...,N_x$
\begin{equation}\label{swe-form-1d-semi}
\begin{cases}
\frac{d}{d t} \bm{U}_i =\mathbf{R}_i~= - g\eta_i \frac{\hat{\tilde{\bm{b}}}_{i+\frac12}-\hat{\tilde{\bm{b}}}_{i-\frac12}}{\Delta x}
-\frac{\widehat{\mathbf{F}}_{i+\frac12}-\widehat{\mathbf{F}}_{i-\frac12}}{\Delta x},
\\
\frac{d}{d t} \bm{V}_i=\mathbf{R}_{x,i}=-gH_i\frac{\hat{\tilde{\bm{b}}}_{i+\frac12}-\hat{\tilde{\bm{b}}}_{i-\frac12}}{\Delta x}- g\eta_i \frac{\widehat{\widetilde{\bm{B}}}_{i+\frac12}-\widehat{\widetilde{\bm{B}}}_{i-\frac12}}{\Delta x}
-\frac{\widehat{\mathbf{\Phi}}_{i+\frac12}-\widehat{\mathbf{\Phi}}_{i-\frac12}}{\Delta x},
\end{cases}
\end{equation}
where
\begin{equation}
\begin{split}
&\widehat{\mathbf{F}}_{i+\frac12}  = \widehat{\mathbf{F}}^{+}_{i+\frac12}+\widehat{\mathbf{F}}^{-}_{i+\frac12},\quad
\widehat{\mathbf{\Phi}}_{i+\frac12} =\widehat{\mathbf{\Phi}}^{+}_{i+\frac12}+\widehat{\mathbf{\Phi}}^{-}_{i+\frac12},
\\
&\hat{\tilde{\bm{b}}}_{i+\frac12} = \hat{\tilde{\bm{b}}}^{+}_{i+\frac12}+\hat{\tilde{\bm{b}}}^{-}_{i+\frac12},\quad
\widehat{\widetilde{\bm{B}}}_{i+\frac12} = \widehat{\widetilde{\bm{B}}}^{+}_{i+\frac12}+\widehat{\widetilde{\bm{B}}}^{-}_{i+\frac12}.
\end{split}
\end{equation}
Obviously, the residue of system \eqref{swe-form-1d-semi} equals to zero when the still water steady-state solution is reached, i.e., the semi-discrete scheme \eqref{swe-form-1d-semi} is well-balanced.

\subsection{Time discretization and limiter}
\label{sec:limiter-1d}

In this section, we consider the explicit third-order strong stability-preserving (SSP) Runge-Kutta scheme to discretize  \eqref{swe-form-1d-semi} in time, as
\begin{equation}
\label{RK}
\begin{cases}
\begin{cases}
\bm{U}^{(1)}_i =\bm{U}^n_i~~~~
+ \Delta t \mathbf{R}_{i}(\bm{U}^{n},\bm{V}^{n}),\\
\bm{V}^{(1)}_i =\bm{V}^{n,mod}_i
+ \Delta t \mathbf{R}_{x,i}(\bm{U}^{n},\bm{V}^{n}),
\end{cases}\\
\begin{cases}
\bm{U}^{(2)}_i =\frac{3}{4}\bm{U}^n_i~~~~+
\frac{1}{4}\big( \bm{U}^{(1)}_i~~~~~+ \Delta t \mathbf{R}_{i}(\bm{U}^{(1)},\bm{V}^{(1)})\big),\\
\bm{V}^{(2)}_i =\frac{3}{4}\bm{V}^{n,mod}_i +
\frac{1}{4}\big( \bm{V}^{(1),mod}_i + \Delta t  \mathbf{R}_{x,i}(\bm{U}^{(1)},\bm{V}^{(1)})\big),
\end{cases}\\
\begin{cases}
\bm{U}^{n+1}_i =\frac{1}{3}\bm{U}^n_i~~~~+
\frac{2}{3}\big(\bm{U}^{(2)}_i~~~~~+ \Delta t \mathbf{R}_{i}(\bm{U}^{(2)},\bm{V}^{(2)})\big),\\
\bm{V}^{n+1}_i =\frac{1}{3}\bm{V}^{n,mod}_i +
\frac{2}{3}\big( \bm{V}^{(2),mod}_i + \Delta t  \mathbf{R}_{x,i}(\bm{U}^{(2)},\bm{V}^{(2)})\big),
\end{cases}
\end{cases}
\end{equation}
where $\bm{V}^{mod}$ is the modified value for the derivative $\bm{V}$, and needs to be modified in each intermediate step.
This procedure is used to control spurious oscillations, and the detail can be seen in \cite{ZhaoZhuang-2020JSC-FD}.

We now briefly describe the HWENO limiter procedure to modify $\bm{V}_i$ and obtain $\bm{V}_i^{mod}$ finally. Similarly as in the spatial reconstruction, the HWENO limiter is also performed on local characteristic directions. For simplicity, we here only describe the procedure in scalar case by assuming that $\nu_i$ is the derivative of $\mu_i$. We use the same stencils $\mathcal{T}_1$, $\mathcal{T}_2$, and $\mathcal{T}_3$ as that in the spatial HWENO reconstruction, then, apply the Hermite interpolation on these stencils, and obtain three quadratic polynomials $p_1(x),p_2(x),p_3(x)$ such that
\begin{equation*}
\begin{split}
&p_1(x): ~~p_1(x_{i+\ell})=\mu_{i+\ell}, \quad \ell=-1,0,\qquad p'_1(x_{i-1})=\nu_{i-1},\\
&p_2(x): ~~p_2(x_{i+\ell})=\mu_{i+\ell}, \quad \ell=0,1,\qquad\quad p'_2(x_{i+1})=\nu_{i+1},\\
&p_3(x): ~~p_3(x_{i+\ell})=\mu_{i+\ell}, \quad \ell=-1,0,1.
\end{split}
\end{equation*}
After, we take the derivative of $p_{\ell}(x),~\ell=1,2,3$, with respect to $x$, and evaluate their values at $x_i$,  obtaining
\begin{equation}\label{step1-dxi}
\begin{split}
p'_1(x_i)&=\frac{2}{\Delta x}\big(\mu_i-\mu_{i-1}\big) - \nu_{i-1},\\
p'_2(x_i)&=\frac{2}{\Delta x}\big(\mu_{i+1}-\mu_{i}\big)- \nu_{i+1},\\
p'_3(x_i)&=\frac{1}{2\Delta x}\big(\mu_{i+1}-\mu_{i-1}\big).
\end{split}
\end{equation}
Finally, the modified derivative $ \nu^{mod}_i$ is defined as
\begin{equation}\label{limiter-1d}
 \nu^{mod}_i =  \omega^L_1 p'_1(x_i)+ \omega^L_2 p'_2(x_i) +\omega^L_3 p'_3(x_i),
\end{equation}
where the nonlinear weights $\omega^L_{\ell},~\ell=1,2,3$ are computed as similar as the above procedure \eqref{GHYZZ} in the reconstruction. The linear weights are $d_1=\frac14$, $d_2=\frac14$, and $d_3=\frac12$, and the
smoothness indicators $\beta^L_{\ell},~\ell=1,2,3$ are
\begin{align*}\small
    \beta^L_1&=(2\mu_{i}-2\mu_{i-1}-  \nu_{i-1}\Delta x)^2+\frac{13}{3}(\mu_{i}- \mu_{i-1}-\nu_{i-1}\Delta x)^2,\\
    \beta^L_2&=(2\mu_{i+1}-2\mu_i-\nu_{i+1}\Delta x)^2+\frac{13}{3}(\mu_{i+1}-\mu_i-\nu_{i+1}\Delta x)^2,\\
   \beta^L_3&=\frac{1}{4}(\mu_{i+1}-\mu_{i-1})^2+\frac{13}{12}(\mu_{i-1}-2 \mu_i+ \mu_{i+1})^2.
\end{align*}

It is not difficult to know that $|\bm{V}_i^{mod}-\bm{V}_i|=\mathcal{O}(\Delta x ^4)$ for smooth solutions,
and $\bm{V}_i^{mod}=\bm{V}_i=0$ exactly for the still water steady-state. That is to say, the limiter procedure maintains the properties of fifth-order accuracy and well-balance.

\section{Well-balanced HWENO scheme for 2D SWEs}
\label{sec:WBFDHWENO-2d}

In this section, we present a well-balanced fifth-order finite difference HWENO scheme for the 2D SWEs. It is straightforward to extend the scheme into 1D to 2D by dimension-by-dimension manner.
The SWEs in 2D are read as
\begin{equation}\label{swe-form-2d}
\frac{\partial}{\partial t}
\begin{bmatrix*}[l]
  h\\
  hu\\
  hv\\
\end{bmatrix*}
+\frac{\partial}{\partial x}
\begin{bmatrix*}[c]
 hu\\
 hu^2+\frac{1}{2}gh^2\\
huv\\
\end{bmatrix*}
+\frac{\partial}{\partial y}
\begin{bmatrix*}[c]
 w\\
huv\\
hv^2+\frac{1}{2}gh^2\\
\end{bmatrix*}
=
\begin{bmatrix*}[c]
  0\\
  -ghb_x\\
  - ghb_y\\
\end{bmatrix*},
\end{equation}
where $h(x,y,t)\geq0$ is the depth of water, $(hu, hv)$ are the discharges,
$(u,v)$ are the velocities,
$b = b(x,y)$ is the bottom topography assumed to be a given time-independent function, and
$g$ is the gravitation acceleration. The still water steady-state solution of $\eqref{swe-form-2d}$ is
\begin{equation}\label{st-2d}
\eta=h+b = C,\quad m=hu=0, \quad w=hv=0.
\end{equation}
Let $\tilde{\bm{b}} = (0,b,0)^{T}$ and $\bar{\bm{b}} = (0,0,b)^{T}$, and
rewrite \eqref{swe-form-2d} into a pre-balanced form with the equilibrium variable $\bm{U} = (\eta = h+b,m=hu,w=hv)^{T}$ as
\begin{equation}\label{swe-form-2d-e}
\frac{\partial \bm{U}}{\partial t}
+\frac{\partial}{\partial x}
\mathbf{F}_1(\bm{U},b)
+\frac{\partial}{\partial y}
\mathbf{F}_2(\bm{U},b)
=\bm{S}(\bm{U},b),
\end{equation}
where
\begin{equation}
\begin{split}
&\mathbf{F}_1(\bm{U},b) = \begin{bmatrix*}[c]
 m\\
  \frac{m^2}{\eta-b}+\frac{1}{2}g\big(\eta^2-2\eta b)\\
  \frac{mw}{\eta-b}
\end{bmatrix*},
\quad
\mathbf{F}_2(\bm{U},b)=\begin{bmatrix*}[c]
 w\\
   \frac{mw}{\eta-b}\\
   \frac{w^2}{\eta-b}+\frac{1}{2}g\big(\eta^2-2\eta b)
\end{bmatrix*},
\\&\bm{S} = \begin{bmatrix*}[c]
  0\\
  -g\eta b_x\\
   -g\eta b_y
\end{bmatrix*} =-g\eta \tilde{\bm{b}}_x-g\eta \bar{\bm{b}}_y.\\
\end{split}
\end{equation}

To construct an Hermite WENO scheme, we take the partial
derivative w.r.t. the variables $x$ and $y$ on both sides of \eqref{swe-form-2d-e}, respectively, and denote
\begin{align*}
& \bm{V} = (H_1,M_1,W_1)^{T}=(\eta_x,m_x,w_x)^{T} = \bm{U}_x,
\\&\bm{Q} = (H_2,M_2,W_2)^{T}=(\eta_y,m_y,w_y)^{T}= \bm{U}_y,
\\&
\widetilde{\bm{B}}_1= (0,B_1,0)^{T}= (0,b_x,0)^{T}=\tilde{\bm{b}}_x ,
\quad \widetilde{\bm{B}}_2 = (0,B_2,0)^{T}= (0,b_y,0)^{T}=\tilde{\bm{b}}_y,
\\&
\bar{\bm{B}}_1= (0,0, B_1)^{T}= (0,0, b_x)^{T}=\bar{\bm{b}}_x,
\quad \bar{\bm{B}}_2= (0,0, B_2)^{T}= (0,0, b_y)^{T}=\bar{\bm{b}}_y.
\end{align*}
Then we have
\begin{equation}\label{swe-form-2d-h}
\begin{cases}
\frac{\partial \bm{U}}{\partial t}
+\frac{\partial\mathbf{F}_1}{\partial x}
+\frac{\partial\mathbf{F}_2}{\partial y}
=-g\eta \tilde{\bm{b}}_x-g\eta \bar{\bm{b}}_y,
\\
\frac{\partial \bm{V}}{\partial t}
+\frac{\partial\mathbf{\Phi}_1}{\partial x}
+\frac{\partial\mathbf{\Phi}_2}{\partial y}
=
 -g\eta(\widetilde{\bm{B}}_1)_x-g\eta(\bar{\bm{B}}_1)_y -gH_1\tilde{\bm{b}}_x-gH_1\bar{\bm{b}}_y,
\\
\frac{\partial \bm{Q}}{\partial t}
+\frac{\partial\mathbf{\Psi}_1}{\partial x}
+\frac{\partial\mathbf{\Psi}_2}{\partial y}
=
 -g\eta(\widetilde{\bm{B}}_2)_x-g\eta(\bar{\bm{B}}_2)_y -gH_2\tilde{\bm{b}}_x-gH_2\bar{\bm{b}}_y,
\end{cases}
\end{equation}
where
\begin{align*}
\mathbf{\Phi}_1(\bm{U},b;\bm{V},B_1) 
&=\frac{\partial \mathbf{F}_1}{\partial \bm{U}}\bm{V} +\frac{\partial\mathbf{F}_1}{\partial \tilde{\bm{b}}}\widetilde{\bm{B}}_1,\ \ \mathbf{\Phi}_2(\bm{U},b;\bm{V},B_1) 
=\frac{\partial \mathbf{F}_2}{\partial \bm{U}}\bm{V} +\frac{\partial\mathbf{F}_2}{\partial \bar{\bm{b}}}\bar{\bm{B}}_1,\\
%
\mathbf{\Psi}_1(\bm{U},b;\bm{Q},B_2) 
&=\frac{\partial \mathbf{F}_1}{\partial \bm{U}}\bm{Q} +\frac{\partial\mathbf{F}_1}{\partial \tilde{\bm{b}}}\widetilde{\bm{B}}_2,\ \
\mathbf{\Psi}_2(\bm{U},b;\bm{Q},B_2) 
=\frac{\partial \mathbf{F}_2}{\partial \bm{U}}\bm{Q} +\frac{\partial\mathbf{F}_2}{\partial \bar{\bm{b}}}\bar{\bm{B}}_2.
%
\end{align*}
For the still water steady-state solution of $\eqref{swe-form-2d}$, we have
\begin{equation}\label{s-2d}
\begin{split}
&\eta=h+b = Const,\qquad m=hu=0, \qquad w=hv=0,\\
&H_1 = \eta_x=0,\qquad\qquad M_1=m_x=0,\qquad W_1= w_x = 0,
\\&H_2 = \eta_y=0,\qquad\qquad M_2=m_y=0,\qquad W_2= w_y = 0.
\end{split}
\end{equation}

Similarly as the 1D case, we would like to find a linear scheme by using linear finite difference operators to approximate all derivatives for the 2D system \eqref{swe-form-2d-h}.

\begin{prop}
Linear scheme for the 2D system \eqref{swe-form-2d-h} satisfying the still water steady-state solution \eqref{st-2d} can maintain the well-balance property.
\end{prop}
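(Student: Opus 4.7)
The plan is to mirror the argument of the preceding 1D proposition direction-by-direction, relying on three simple facts: the discrete operator is linear, it annihilates constants (so $\mathcal{D}(\eta^{2})=0$ when $\eta$ is spatially constant), and it is applied identically to the source-term derivative and to the matching pressure-derived contribution in the flux. I will substitute the still-water values \eqref{s-2d} into each of the three subsystems of \eqref{swe-form-2d-h} and show their semi-discrete residuals vanish.

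For the $\bm{U}$ equation, only the pressure terms survive: with $m=w=0$, the flux $\mathbf{F}_1$ reduces to a vector whose only nonzero slot is $\tfrac{1}{2}g(\eta^{2}-2\eta b)$ in the second component, and $\mathbf{F}_2$ similarly has $\tfrac{1}{2}g(\eta^{2}-2\eta b)$ only in the third. The source contributes $-g\eta b_x$ and $-g\eta b_y$ to slots two and three. Expanding $-\mathcal{D}_x\bigl(\tfrac{1}{2}g(\eta^{2}-2\eta b)\bigr)$ by linearity produces a term $g\eta \mathcal{D}_x(b)$ that cancels the discretized $x$-source, and a leftover $-\tfrac{1}{2}g\mathcal{D}_x(\eta^{2})$ that vanishes because $\eta$ is constant; the $y$-direction argument is identical. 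This is the direct 2D analog of the cancellation in the preceding 1D proof.

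The $\bm{V}$ equation is where the additional bookkeeping appears and where I expect the only real obstacle. I will first simplify the fluxes $\mathbf{\Phi}_1$ and $\mathbf{\Phi}_2$ at the steady state: since $\bm{V}=0$, the $\tfrac{\partial \mathbf{F}_k}{\partial \bm{U}}\bm{V}$ pieces drop out, so only the $\tfrac{\partial \mathbf{F}_1}{\partial \tilde{\bm{b}}}\widetilde{\bm{B}}_1$ and $\tfrac{\partial \mathbf{F}_2}{\partial \bar{\bm{b}}}\bar{\bm{B}}_1$ contributions remain. A short Jacobian computation at $m=w=0$ collapses these to $\mathbf{\Phi}_1 = (0,-g\eta B_1,0)^{T}$ and $\mathbf{\Phi}_2 = (0,0,-g\eta B_1)^{T}$. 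The source, after killing the $H_1=0$ terms, is $(0,-g\eta(\widetilde{\bm{B}}_1)_x,-g\eta(\bar{\bm{B}}_1)_y)^{T}$. Applying the same linear operator on both sides makes the second-slot residual read $-\mathcal{D}_x(-g\eta B_1)-g\eta \mathcal{D}_x(B_1)=0$ and the third-slot residual read $-\mathcal{D}_y(-g\eta B_1)-g\eta \mathcal{D}_y(B_1)=0$. The $\bm{Q}$ equation then follows by the obvious $\bm{V}\leftrightarrow \bm{Q}$, $B_1\leftrightarrow B_2$ symmetry without further work. The main care point throughout is tracking which vector slot each pressure gradient lives in and hence which column of the Jacobian survives the steady-state evaluation; once that is fixed, the cancellation is a line-by-line rerun of the 1D proposition.
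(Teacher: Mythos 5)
Your proposal is correct and is exactly the argument the paper intends: the paper states the 2D proposition without writing out a proof, relying on it being the direction-by-direction analog of the 1D proposition, and your substitution of the steady state into each subsystem, the Jacobian evaluations $\mathbf{\Phi}_1=(0,-g\eta B_1,0)^{T}$, $\mathbf{\Phi}_2=(0,0,-g\eta B_1)^{T}$, and the cancellations via linearity and constancy of $\eta$ all match the 1D proof's mechanism. No gaps.
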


Note that the governing equations \eqref{swe-form-2d-h} have a part of similar expressions with \eqref{swe-form-1d-h}.
Hence, we can apply the same procedures of 1D case to discretize the part of fluxes with similar expressions. More explicitly, we use the procedure in {\bf Step 1} to discretize $(\frac{\partial \mathbf{F}_1} {\partial x}, \frac{\partial \mathbf{\Phi}_1} {\partial x}\big)$ and $\big(\frac{\partial \mathbf{F}_2} {\partial y}, \frac{\partial \mathbf{\Psi}_2} {\partial y} \big)$ along $x$- and $y$-directions, respectively; and, use the procedure in {\bf Step 2} to discretize the source terms
$\big( \tilde{\bm{b}}_x,(\widetilde{\bm{B}}_1)_x \big)$ and $\big( \bar{\bm{b}}_y,(\bar{\bm{B}}_2)_y \big)$ along $x$- and $y$-directions, respectively.
For the mixed derivative terms $\frac{\partial \mathbf{\Psi}_1} {\partial x}$, $\frac{\partial \mathbf{\Phi}_2} {\partial y}$, $(\widetilde{\bm{B}}_2)_x$ and $(\bar{\bm{B}}_1)_y$, we directly use the linear approximation in each component (without any local characteristic decomposition) as
\begin{equation}\label{mixed}
\begin{split}
&(\widehat{\mathbf{\Psi}}_1)_{i+\frac12,j}=
-\frac{1}{12}(\mathbf{\Psi}_1)_{i-1,j}+\frac{7}{12}(\mathbf{\Psi}_1)_{i,j}
+\frac{7}{12}(\mathbf{\Psi}_1)_{i+1,j}-\frac{1}{12}(\mathbf{\Psi}_1)_{i+2,j},
\\&(\widehat{\mathbf{\Phi}}_2)_{i,j+\frac12}=
-\frac{1}{12}(\mathbf{\Phi}_2)_{i,j-1}+\frac{7}{12}(\mathbf{\Phi}_2)_{i,j}
+\frac{7}{12}(\mathbf{\Phi}_2)_{i,j+1}-\frac{1}{12}(\mathbf{\Phi}_2)_{i,j+2},
\end{split}
\end{equation}
and the approximation of $(\widehat{\widetilde{\bm{B}}}_2)_{i+\frac12,j}$ and $(\widehat{\bar{\bm{B}}}_1)_{i,j+\frac12}$ is same to \eqref{mixed} for $(\widehat{\mathbf{\Psi}}_1)_{i+\frac12,j}$ and $(\widehat{\mathbf{\Phi}}_2)_{i,j+\frac12}$, respectively.

Then, the semi-discrete well-balanced finite difference HWENO scheme of \eqref{swe-form-2d-h} is
\begin{equation}\label{swe-form-2d-dis}
\begin{cases}
\frac{d}{d t}\bm{U}_{i,j}
= &
-\frac{(\hat{\mathbf{F}}_{1})_{i+\frac12,j}-(\hat{\mathbf{F}}_{1})_{i-\frac12,j}}{\Delta x}
-\frac{(\hat{\mathbf{F}}_{2})_{i,j+\frac12}-(\hat{\mathbf{F}}_{2})_{i,j-\frac12}}{\Delta y}
\\&+ g\eta_{i,j}\frac{\hat{\tilde{\bm{b}}}_{i+\frac12,j}-\hat{\tilde{\bm{b}}}_{i-\frac12,j}}{\Delta x}+ g\eta_{i,j}\frac{\hat{\bar{\bm{b}}}_{i,j+\frac12}-\hat{\bar{\bm{b}}}_{i,j-\frac12}}{\Delta y},
\\
\frac{d}{d t}\bm{V}_{i,j} =&
-\frac{(\hat{\mathbf{\Phi}}_{1})_{i+\frac12,j}-(\hat{\mathbf{\Phi}}_{1})_{i-\frac12,j}}{\Delta x}-\frac{(\hat{\mathbf{\Phi}}_{2})_{i,j+\frac12}-(\hat{\mathbf{\Phi}}_{2})_{i,j-\frac12}}{\Delta y}
\\&
+ g\eta_{i,j}\frac{(\widehat{\widetilde{\bm{B}}}_1)_{i+\frac12,j}
                -\widehat{\widetilde{\bm{B}}}_1)_{i-\frac12,j}}{\Delta x}
+g\eta_{i,j}\frac{(\widehat{\bar{\bm{B}}}_1)_{i,j+\frac12}
-(\widehat{\bar{\bm{B}}}_1)_{i,j-\frac12}}{\Delta y}
\\&+ g(H_1)_{i,j}\frac{\hat{\tilde{\bm{b}}}_{i+\frac12,j}-\hat{\tilde{\bm{b}}}_{i-\frac12,j}}{\Delta x}+ g(H_1)_{i,j}\frac{\hat{\bar{\bm{b}}}_{i,j+\frac12}-\hat{\bar{\bm{b}}}_{i,j-\frac12}}{\Delta y},\\
\frac{d}{d t}\bm{Q}_{i,j} =&
-\frac{(\hat{\mathbf{\Psi}}_{1})_{i+\frac12,j}-(\hat{\mathbf{\Psi}}_{1})_{i-\frac12,j}}{\Delta x}-\frac{(\hat{\mathbf{\Psi}}_{2})_{i,j+\frac12}-(\hat{\mathbf{\Psi}}_{2})_{i,j-\frac12}}{\Delta y}
\\&
+ g\eta_{i,j}\frac{(\widehat{\widetilde{\bm{B}}}_2)_{i+\frac12,j}
                -\widehat{\widetilde{\bm{B}}}_2)_{i-\frac12,j}}{\Delta x}
+g\eta_{i,j}\frac{(\widehat{\bar{\bm{B}}}_2)_{i,j+\frac12}
-(\widehat{\bar{\bm{B}}}_2)_{i,j-\frac12}}{\Delta y}
\\&+ g(H_2)_{i,j}\frac{\hat{\tilde{\bm{b}}}_{i+\frac12,j}-\hat{\tilde{\bm{b}}}_{i-\frac12,j}}{\Delta x}+ g(H_2)_{i,j}\frac{\hat{\bar{\bm{b}}}_{i,j+\frac12}-\hat{\bar{\bm{b}}}_{i,j-\frac12}}{\Delta y}.
\end{cases}
\end{equation}

\vspace{10pt}

Similarly as in one dimension, we use the explicit third-order SSP Runge-Kutta scheme \eqref{RK} to discretize \eqref{swe-form-2d-dis}, and apply the HWENO limiter (cf. \eqref{limiter-1d}) to control the derivatives $\bm{V}_{i,j}$ and $\bm{Q}_{i,j}$ in the time discretization step by a dimension-by-dimensional manner.

\section{Numerical results}
\label{sec:numerical}

In this section we present the numerical results of the well-balanced fifth-order finite difference HWENO scheme described in the previous sections for the one- and two-dimensional shallow water equations with non-flat bottom topography. The CFL number is taken as $0.6$, and the gravitation constant is taken as $g=9.812$ in the computation.
Since its analytical form is unavailable, for comparisons, we take the numerical solution obtained by the fifth-order finite difference WENO-XS scheme \cite{Xing-Shu-2005JCP} with $N_x=3000$ in 1D case and with $N_x=N_y=1600$ in 2D case as the reference, unless otherwise stated.

\begin{example}\label{test5-1d}
(The accuracy test for the 1D SWEs over a sinusoidal hump.)
\end{example}
This example is used to verify the high order accuracy and efficiency of the proposed HWENO scheme.
The bottom topography is
\begin{equation*}
b(x)=\sin^2(\pi x), \quad x \in [0,1].
\end{equation*}
We use periodic boundary conditions for all unknown variables. The initial conditions are given as
\begin{equation*}
h(x,0)=5+e^{\cos(2\pi x)}, \quad
hu(x,0)=\sin\big(\cos(2\pi x)\big) .
\end{equation*}
We compute the solution up to $t=0.1$ when the solution is still smooth.
A reference solution is obtained using the fifth-order finite difference WENO-XS scheme \cite{Xing-Shu-2005JCP} with $N_x=25600$, and treat this reference solution as the exact solution in computing the numerical errors.
The error of $L^1$ and $L^\infty$ norm of the proposed HWENO scheme for the water depth $h$ and water discharge $hu$ are plotted in Fig.~\ref{Fig:test5-1d-h-hu}.
It can be seen that the scheme has the expected fifth-order convergence in both $L^1$ and $L^\infty$ norm. Moreover, the figures show that the HWENO scheme produces a  smaller error than the WENO-XS scheme \cite{Xing-Shu-2005JCP} on the same number of elements. It is worth pointing out that the proposed fifth-order HWENO scheme only needs a compact three-point stencil while the fifth-order WENO-XS scheme needs a five-point stencil in the reconstruction. Thus, the HWENO scheme is more accurate and compact than the WENO-XS scheme.

To show the efficiency of the proposed HWENO scheme, we present  the error of $L^1$ norm against the CPU time for the water depth $h$ and water discharge $hu$ in Fig.~\ref{Fig:test5-1d-cpu}.
One can see that the HWENO scheme is more efficient than  the WENO-XS scheme \cite{Xing-Shu-2005JCP} as the former obtains a smaller error  for a fixed CPU time.

\begin{figure}[H]
\centering
\subfigure[$h$]{
\includegraphics[width=0.45\textwidth,trim=20 10 30 30,clip]{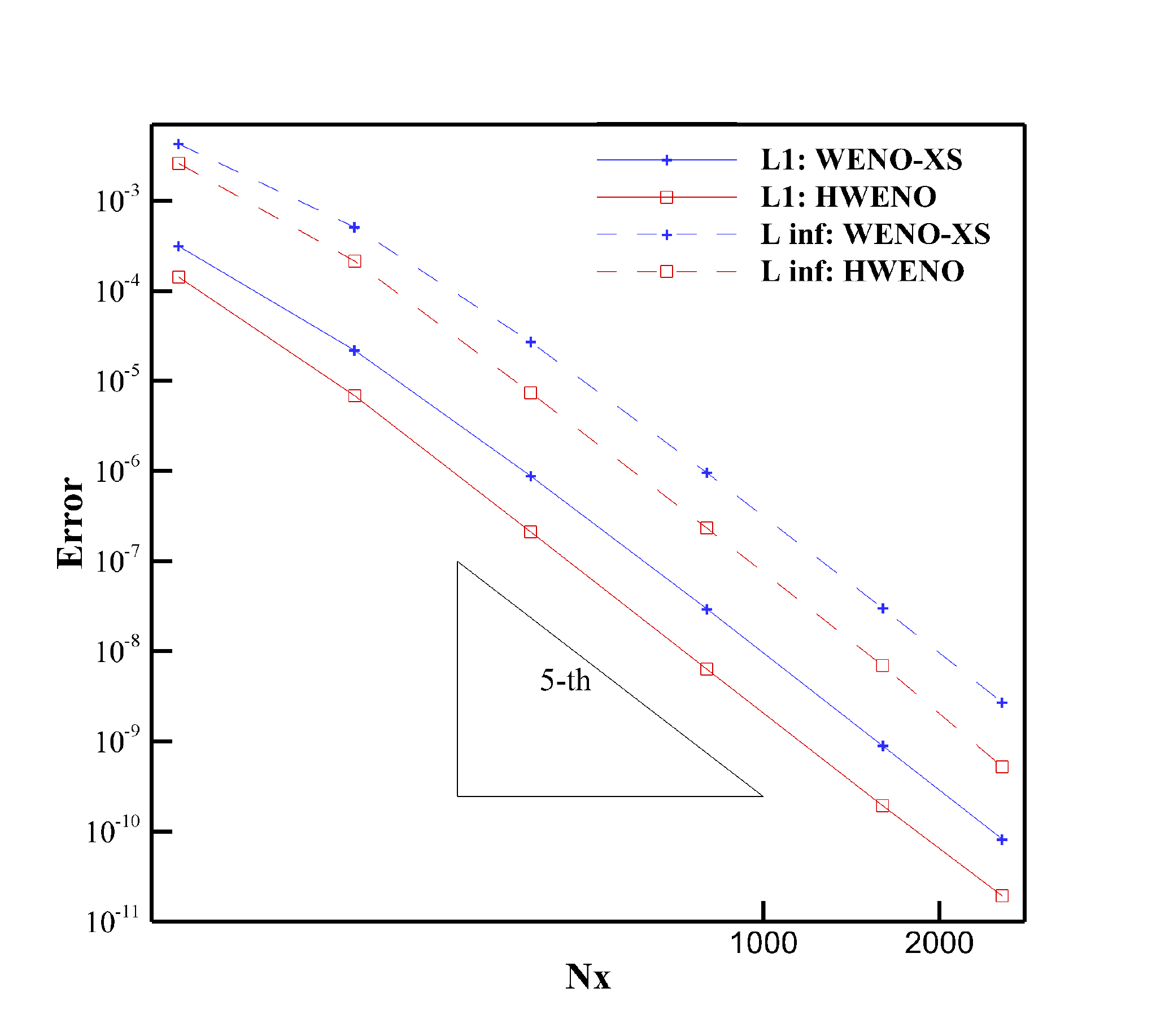}}
\subfigure[$hu$]{
\includegraphics[width=0.45\textwidth,trim=20 10 30 30,clip]{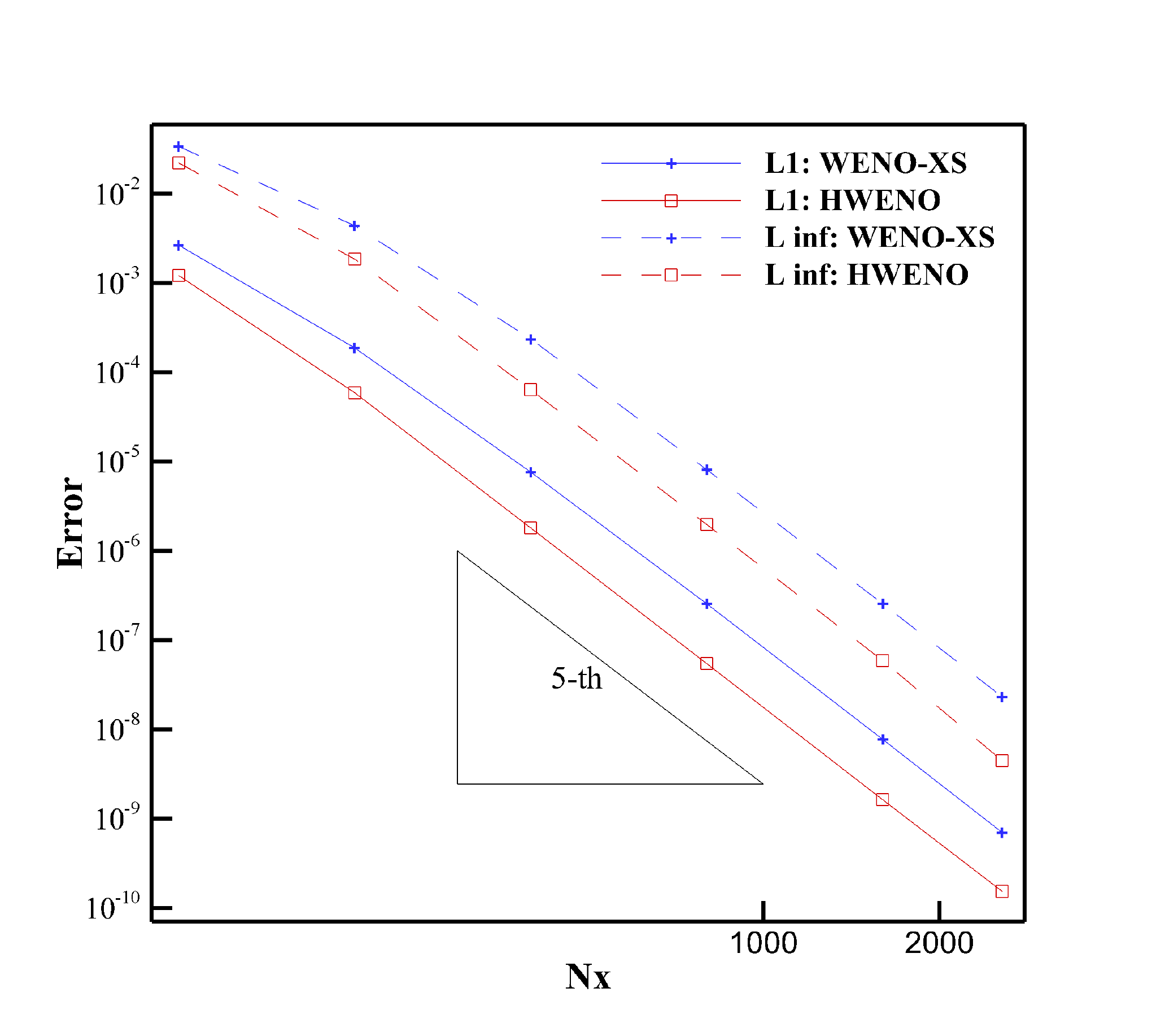}}
\caption{Example \ref{test5-1d}. The error of $L^1$ and $L^\infty$ norm for the water depth $h$ and water discharge $hu$.}
\label{Fig:test5-1d-h-hu}
\end{figure}

\begin{figure}[H]
\centering
\subfigure[$h$]{
\includegraphics[width=0.45\textwidth,trim=20 10 30 30,clip]{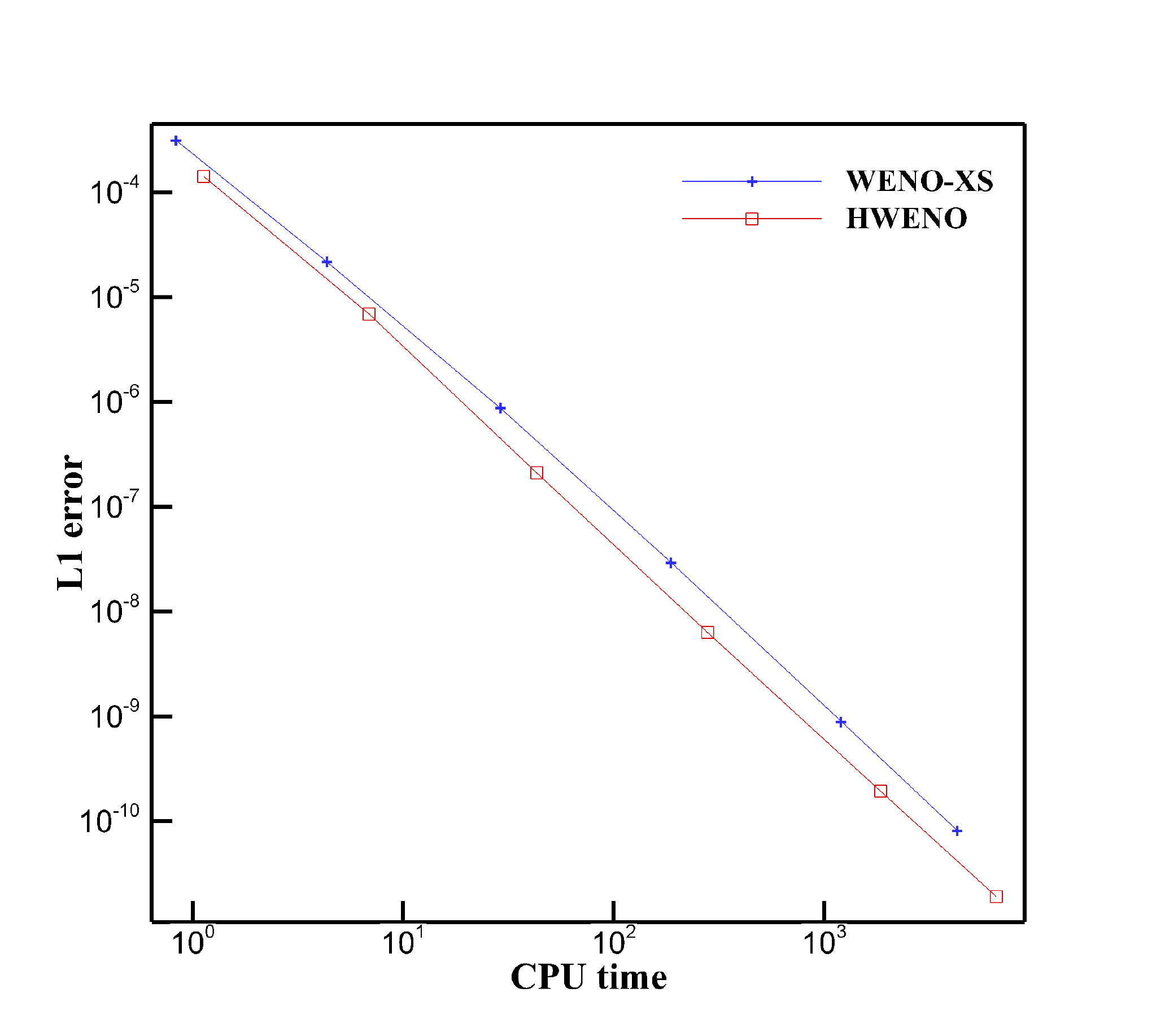}}
\subfigure[$hu$]{
\includegraphics[width=0.45\textwidth,trim=20 10 30 30,clip]{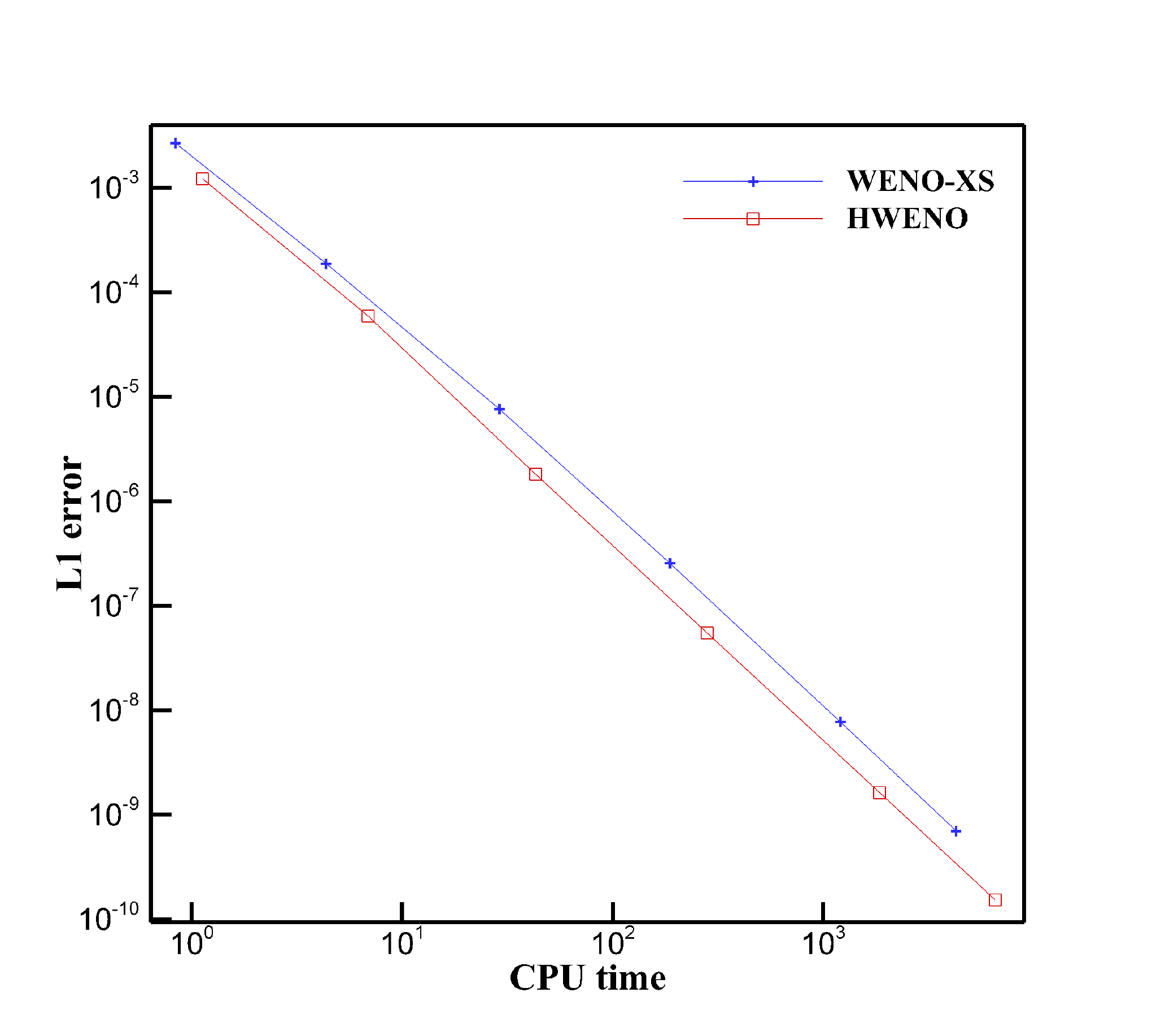}}
 \caption{Example \ref{test5-1d}. The error of $L^1$ norm against the CPU time.}
\label{Fig:test5-1d-cpu}
\end{figure}

\begin{example}\label{test1+2-1d}
(The lake-at-rest steady-state flow test for the 1D SWEs.)
\end{example}
In this example we consider a lake-at-rest steady-state flow over a smooth and a discontinuous bottom topographies
to verify the well-balance property of the HWENO scheme. The bottom topographies are given by
\begin{align}
& b(x)=5 e^{-\frac25(x-5)^2},  \quad x \in [0,10],
\label{b-1}
\\
& b(x)=
\begin{cases}
4,&  x \in [4, 8], \\
0,&  x \in [0,4) \cup (8, 10].
\end{cases}
\label{b-2}
\end{align}
The initial data is at the lake-at-rest steady-state $u=0, ~h+b=10$.
This still water steady-state solution should be preserved exactly if the HWENO scheme is well-balanced.

To show that the well-balance property is attained up to the level of round-off error, we computed the solution up to $t=0.5$ using single, double and quadruple precisions with $N_x=200$.
The $L^1$ and $L^\infty$ error for $h+b$ and $hu$ are presented in Table~\ref{tab:test1-1d-error} for the smooth bottom topography.
Similar results for the discontinuous bottom topography are shown in Table~\ref{tab:test2-1d-error}.
We can clearly see that the $L^1$ and $L^\infty$ error for $h+b$ and $hu$ are at the level of round-off errors for different precisions, and it clearly shows that the HWENO scheme is well-balanced.

\begin{table}[H]
\centering
\caption{Example \ref{test1+2-1d}.
Well-balanced test over the smooth bottom topography \eqref{b-1}.}
\label{tab:test1-1d-error}
\medskip
\begin{tabular} {llllll}
 \toprule
Precision    & \multicolumn{2}{c}{$h$} && \multicolumn{2}{c}{$hu$}  \\
   \cline{2-3} \cline{5-6}
 &$L^1$ error & $L^\infty$ error &  & $L^1$ error & $L^\infty$ error \\
\midrule
Single &    9.63E-05 &     1.02E-04 & &    3.09E-05 &     8.66E-05\\
Double &     3.90E-15 &     1.42E-14 &  &   3.43E-14 &     1.39E-13\\
Quadruple &     3.32E-33 &     1.08E-32 &  &   3.35E-32 &     1.35E-31\\
\bottomrule
\end{tabular}
\end{table}

\begin{table}[H]
\centering
\caption{Example \ref{test1+2-1d}.
Well-balanced test over the discontinuous bottom topography \eqref{b-2}.}
\label{tab:test2-1d-error}
\medskip
\begin{tabular} {llllll}
 \toprule
Precision    & \multicolumn{2}{c}{$h$} &  & \multicolumn{2}{c}{$hu$}  \\
\cline{2-3} \cline{5-6}
&$L^1$ error & $L^\infty$ error &  & $L^1$ error & $L^\infty$ error \\
\midrule
Single &     9.12E-05 &     9.82E-05 & &    3.93E-05 &     1.58E-04\\
Double &     3.22E-15 &     1.24E-14 & &    2.68E-14 &     9.88E-14\\
Quadruple &     3.52E-33 &     1.08E-32 & &    2.88E-32 &     1.02E-31\\
\bottomrule
\end{tabular}
\end{table}

\begin{example}\label{test3-1d}
(The perturbed lake-at-rest steady-state flow test for the 1D SWEs.)
\end{example}
This example is used to show the ability of the HWENO scheme to accurately compute small perturbations of a lake-at-rest steady-state flow over non-flat bottom topography.
The bottom topography in this example is taken as
\begin{equation}
\label{B-5}
b(x)=
\begin{cases}
0.25(\cos(10\pi(x-1.5))+1),&  x \in [1.4, 1.6], \\
0,&  x \in [0, 1.4) \cup (1.6, 2],
\end{cases}
\end{equation}
which has a bump in the middle of the physical interval.
The initial conditions are given by
\begin{equation*}
h(x,0)=
\begin{cases}
1-b(x)+\varepsilon,& x \in [1.1, 1.2],\\
1-b(x),& \text{otherwise,}
\end{cases}
\quad  u(x,0)=0,
\end{equation*}
where $\varepsilon$ is a constant for the perturbation magnitude.
We here consider two cases of $\varepsilon=0.2$ and $\varepsilon=10^{-3}$.
The initial conditions for two cases are plotted in Fig.~\ref{Fig:test3-1d-initial}. The initial wave splits into two waves propagating at the characteristic speeds $\pm \sqrt{gh}$.

We compute the solution up to $t=0.2$ when the right wave has
already passed the bottom bump.
In Fig.~\ref{Fig:test3-1d-case1}, we plot the free water surface $h+b$ and discharge $hu$  obtained by the HWENO and WENO-XS schemes with $N_x=200$ for $\varepsilon=0.2$.
The similar results for $\varepsilon=10^{-3}$ are plotted in Fig.~\ref{Fig:test3-1d-case2}.
These figures show that the HWENO scheme is able to capture the waves of large or small pulses.
From the Figs.~\ref{Fig:test3-1d-case1} and \ref{Fig:test3-1d-case2}, we can clearly see that there are no spurious numerical oscillations, verifying the essentially non-oscillatory property of the proposed HWENO scheme,
and the results of the HWENO scheme have a slightly better resolution than those of the WENO-XS scheme.
Note that, for the small perturbation magnitude $\varepsilon=10^{-3}$, we take $\epsilon=10^{-10}$ in formula \eqref{GHYZZ} to calculate the HWENO nonlinear weights, and  $\epsilon$ is also reduced in the WENO-XS scheme \cite{Xing-Shu-2005JCP}.

\begin{figure}[H]
\centering
\subfigure[$\varepsilon =0.2$]{
\includegraphics[width=0.45\textwidth,trim=0 0 20 10,clip]{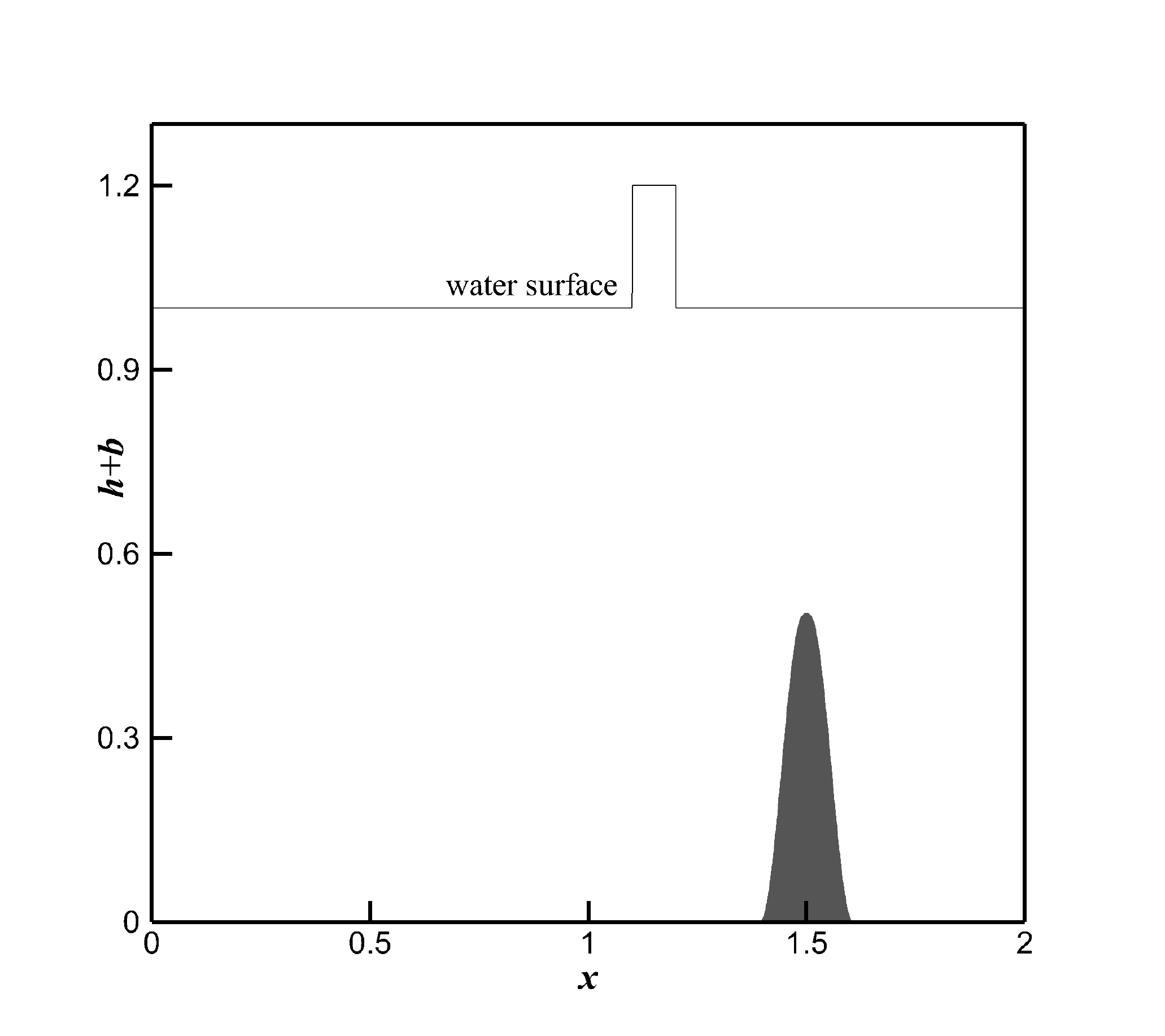}}
\subfigure[$\varepsilon =10^{-3}$]{
\includegraphics[width=0.45\textwidth,trim=0 0 20 10,clip]{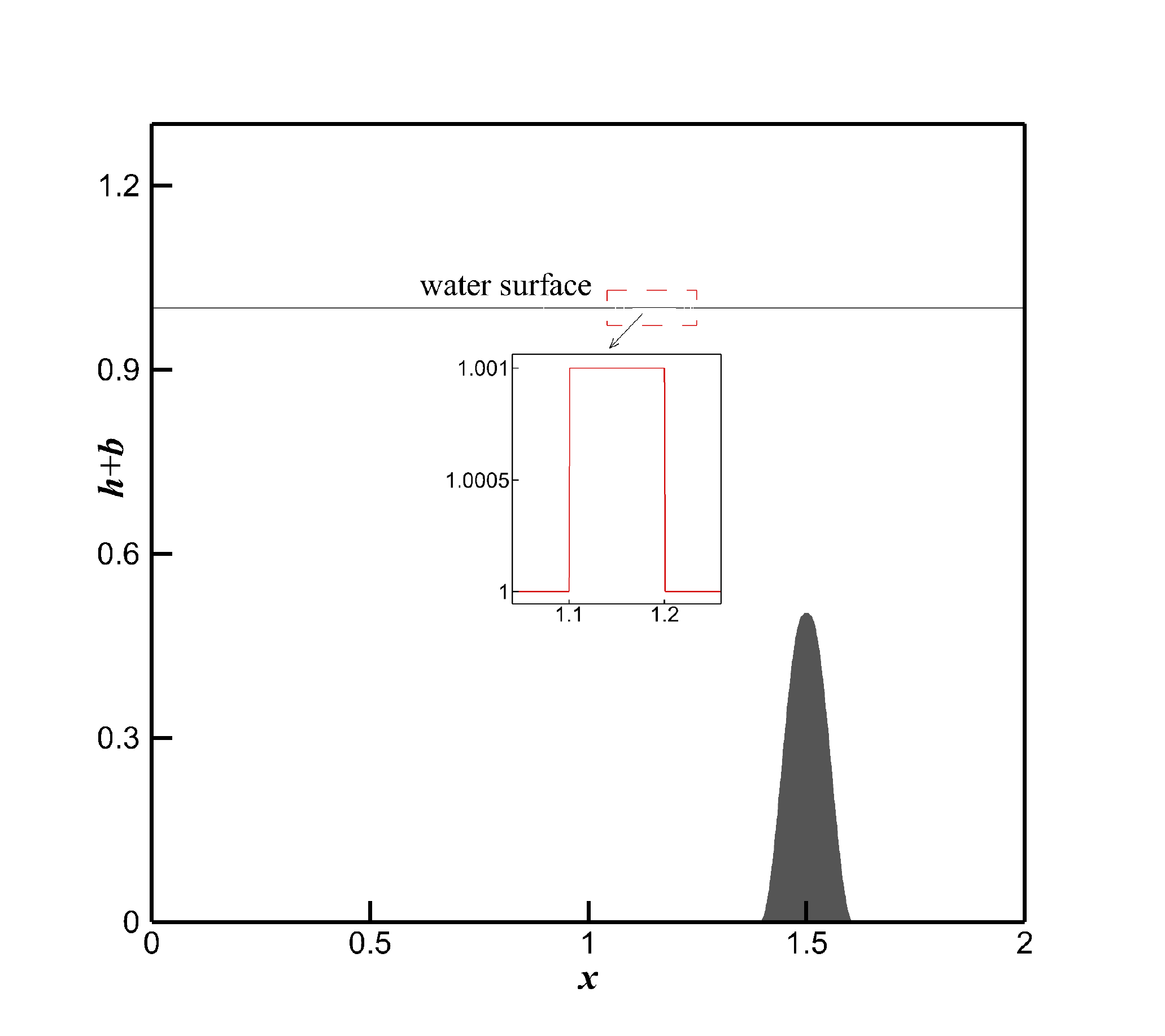}}
\caption{Example \ref{test3-1d}. The initial free water surface level $h+b$ and bottom topography $b$ for the small perturbations $\varepsilon=0.2$ and $\varepsilon=10^{-3}$.}
\label{Fig:test3-1d-initial}
\end{figure}

\begin{figure}[H]
\centering
\subfigure[$h+b$]{
\includegraphics[width=0.45\textwidth,trim=0 0 20 10,clip]{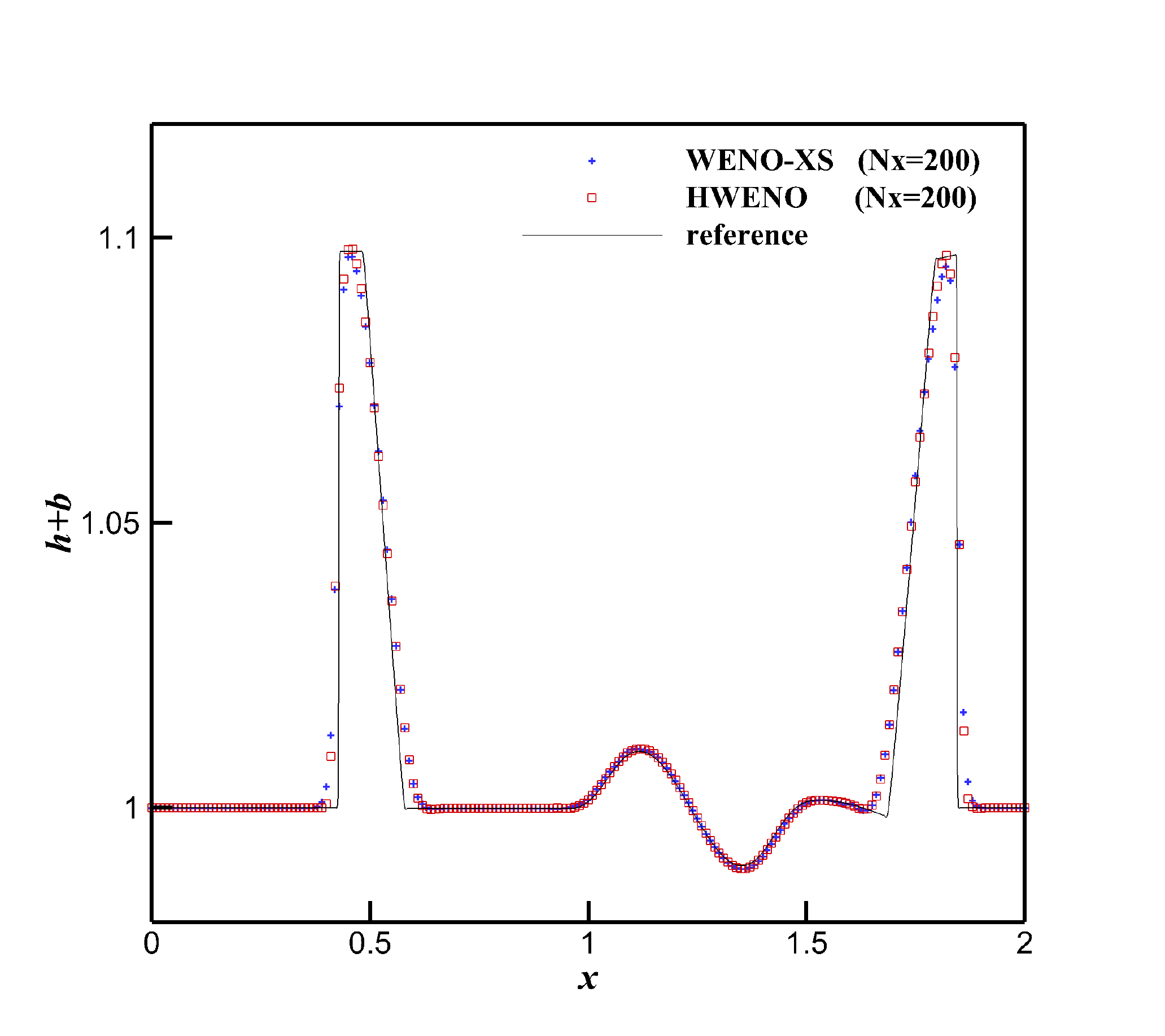}}
\subfigure[$hu$]{
\includegraphics[width=0.45\textwidth,trim=0 0 20 10,clip]{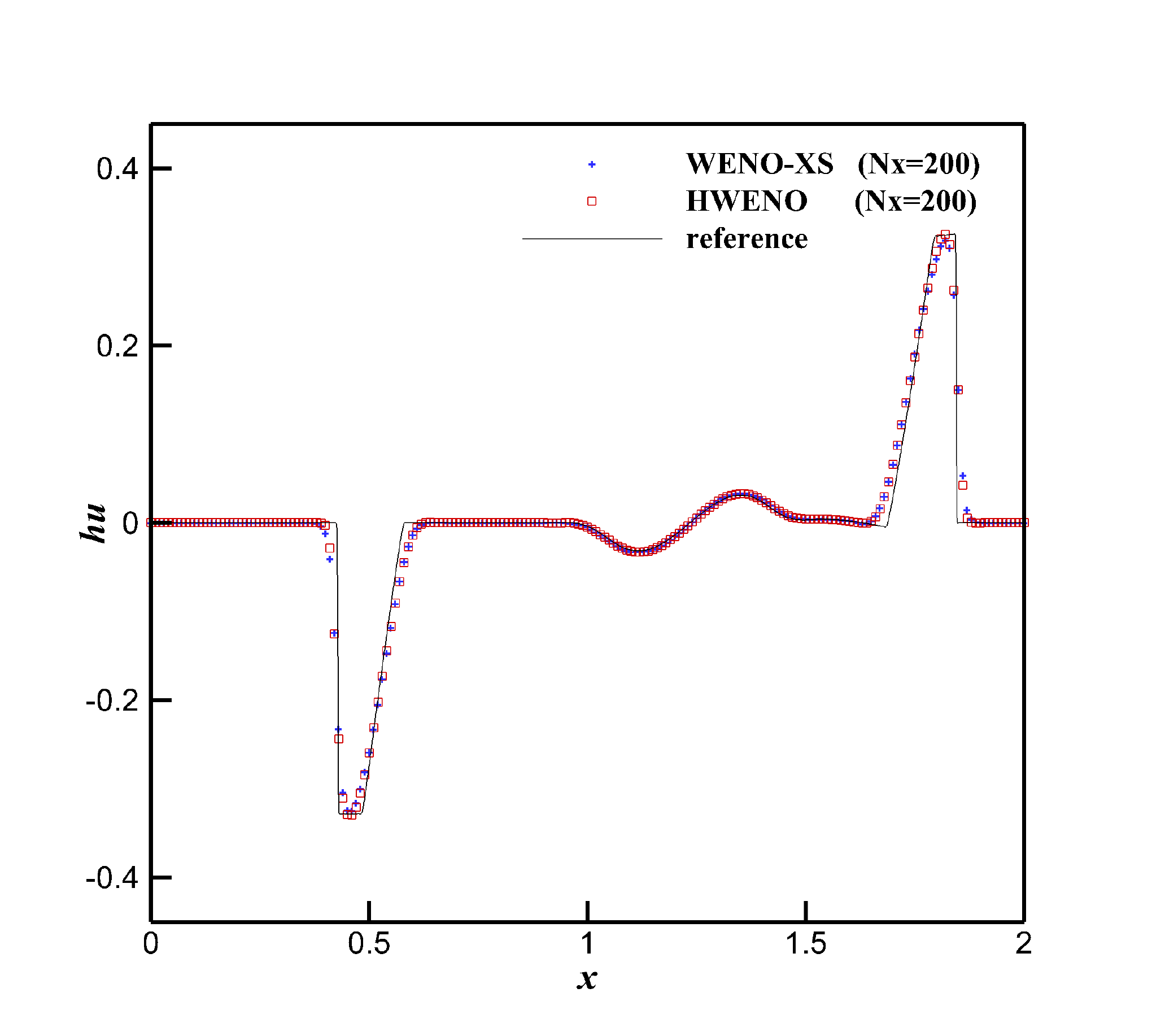}}
\caption{Example \ref{test3-1d}. The free water surface $h+b$ and discharge $hu$ at $t=0.2$ for large pulse $\varepsilon =0.2$.}
\label{Fig:test3-1d-case1}
\end{figure}

\begin{figure}[H]
\centering
\subfigure[$h+b$]{
\includegraphics[width=0.45\textwidth,trim=0 0 20 10,clip]{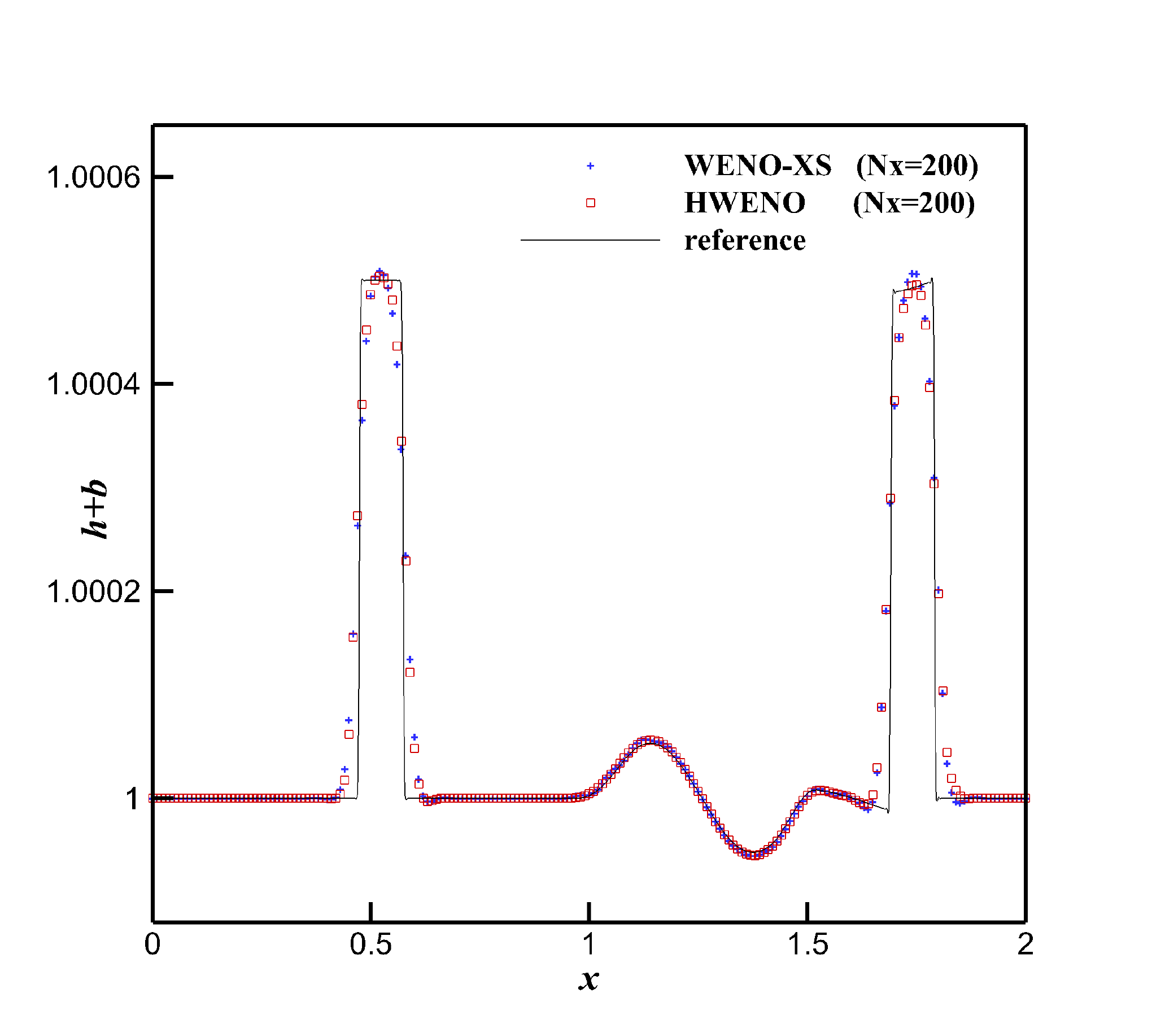}}
\subfigure[$hu$]{
\includegraphics[width=0.45\textwidth,trim=0 0 20 10,clip]{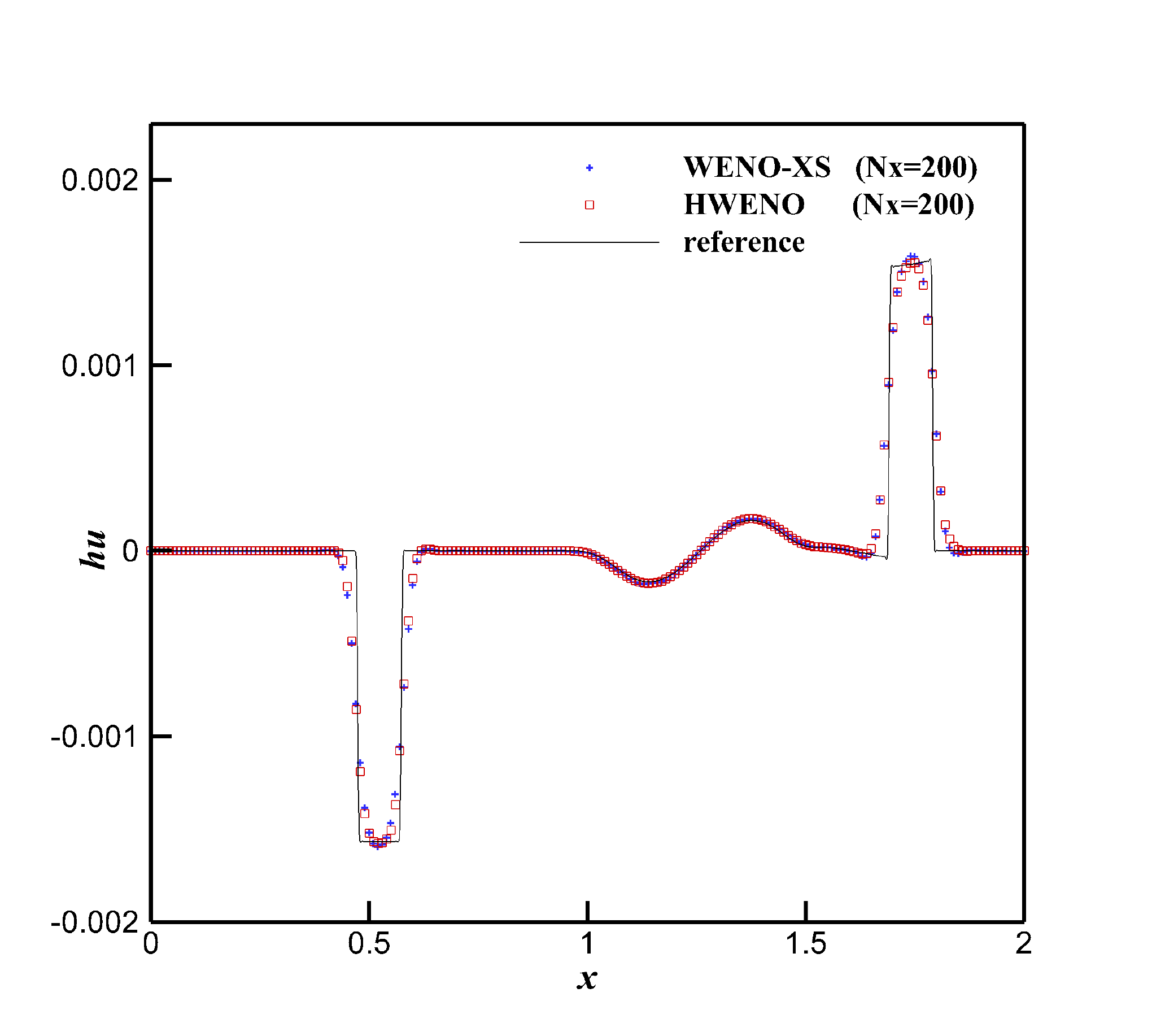}}
\caption{Example \ref{test3-1d}. The free water surface $h+b$ and discharge $hu$ at $t=0.2$ for small pulse $\varepsilon =10^{-3}$.}
\label{Fig:test3-1d-case2}
\end{figure}

\begin{example}\label{test6-1d}
(The rarefaction and shock waves test for the 1D SWEs with wavy bottom topography.)
\end{example}
In this test we compute the 1D SWEs with a wavy bottom topography \cite{HzTang-2004} as
\begin{equation}
\label{B-3}
b(x)=
\begin{cases}
0.3 \cos^{30}(\frac{\pi}{2}(x-1)),& 0\leq x\leq 2,\\
0,& \text{otherwise}.
\end{cases}
\end{equation}
The initial conditions are given by
\begin{equation*}
h(x,0)=
\begin{cases}
2-b(x),& x\in [-10,1],\\
0.35-b(x),& x\in (1, 10], \\
\end{cases}
\quad \quad
u(x,0)=
\begin{cases}
1,& x\in [-10,1],\\
0,& x\in (1, 10].
\end{cases}
\end{equation*}
We compute the solution up to $t=1$.

The solution shows more complex features, made up of a left rarefaction wave and two hydraulic jumps/shocks traveling to the right.
One shock near the location at $x=2$ occurs in the flow over the non-flat bed topography.
In Fig.~\ref{Fig:test6-1d}, we plot the free water surface level $h+b$ and water discharge $hu$ at $t=1$ obtained by the HWENO and WENO-XS schemes with $N_x=200$.
From the figure, we can clearly see that the HWENO scheme works well for this example, giving well resolved, and non-oscillatory solutions, and has better resolutions than WENO-XS scheme in resolving the shock near $x=2$.

\begin{figure}[H]
\centering
\subfigure[$h+b$ ]{
\includegraphics[width=0.45\textwidth,trim=0 0 20 10,clip]{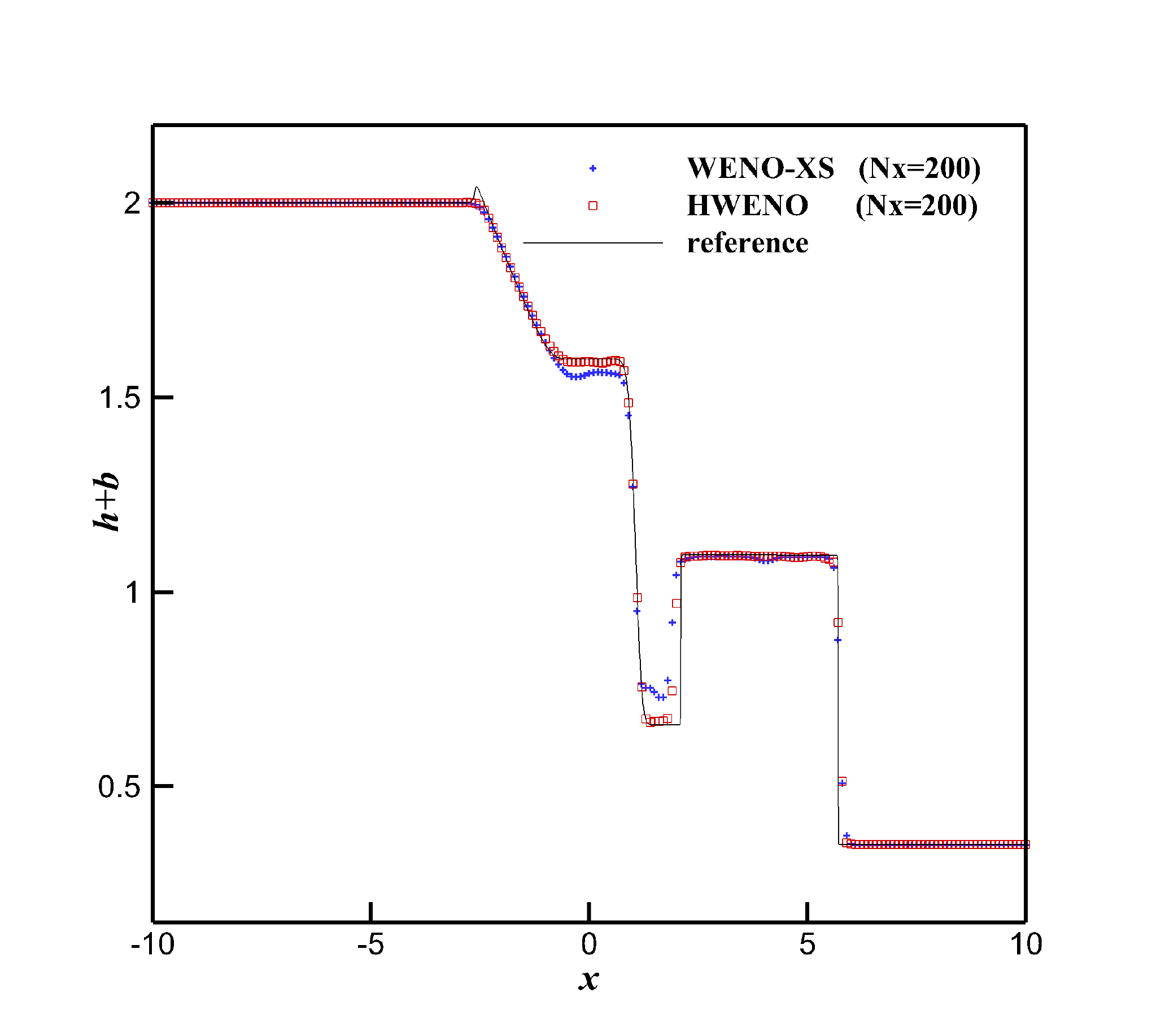}}
\subfigure[$hu$ ]{
\includegraphics[width=0.45\textwidth,trim=0 0 20 10,clip]{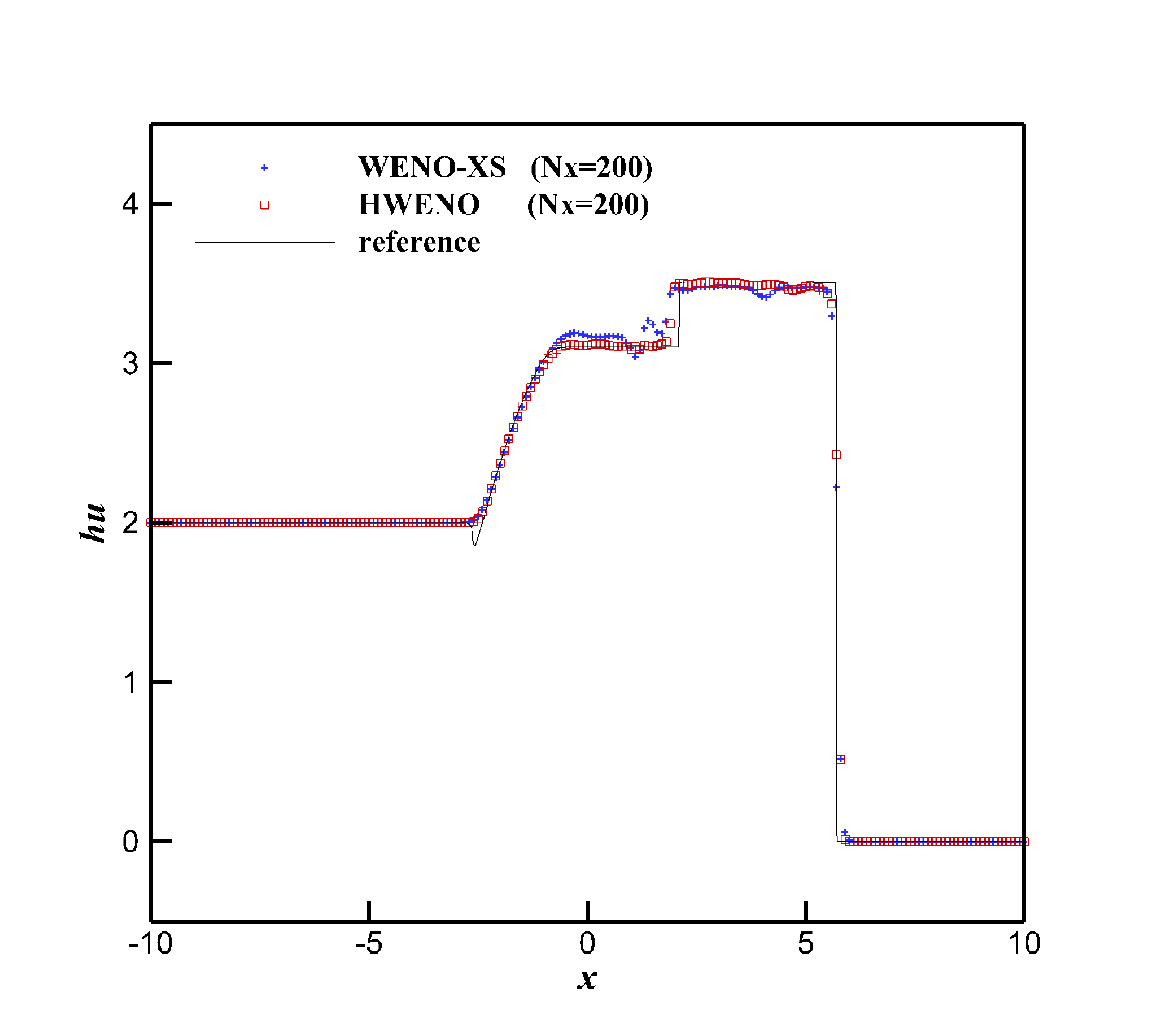}}
\caption{Example \ref{test6-1d}. The free water surface $h+b$ and discharge $hu$ at $t=1$.}
\label{Fig:test6-1d}
\end{figure}

\begin{example}\label{test7-1d}
(The dam break problem for the 1D SWEs over a rectangular bump topography.)
\end{example}
In this test, we simulate the dam break problem over a rectangular bump which produces a rapidly varying flow over a discontinuous bottom topography.
In the computation domain $(0, 1500)$, the bottom topography consists of one rectangular hump
\begin{equation*}
b(x)=
\begin{cases}
8,& |x-750|\leq\frac{1500}{8},\\
0,& \text{otherwise}.
\end{cases}
\end{equation*}
The initial conditions are given by
\begin{equation*}
h(x,0)=
\begin{cases}
20-b(x),& x\leq 750,\\
15-b(x),& \text{otherwise},\\
\end{cases}
\quad \quad
u(x,0)=0.
\end{equation*}

We compute the solution up to $t=15$ and $t=60$.
In this test, the water depth $h$ is discontinuous at the points $x = 562.5$ and $x = 937.5$ and the free water surface level $h+b$ is smooth there.
The free water surface level $h+b$ and water discharge $hu$ at $t=15$ and $t=60$ obtained by the HWENO and WENO-XS schemes with $N_x=200$ are plotted in Fig.~\ref{Fig:test7-1d}.
One can see that the HWENO scheme also works well for this example, giving well resolved, and non-oscillatory solutions, and the results of the HWENO scheme have a slightly better resolution than those of the WENO-XS scheme.

\begin{figure}[H]
\centering
\subfigure[$h+b$: $t=15$]{
\includegraphics[width=0.45\textwidth,trim=0 0 20 10,clip]{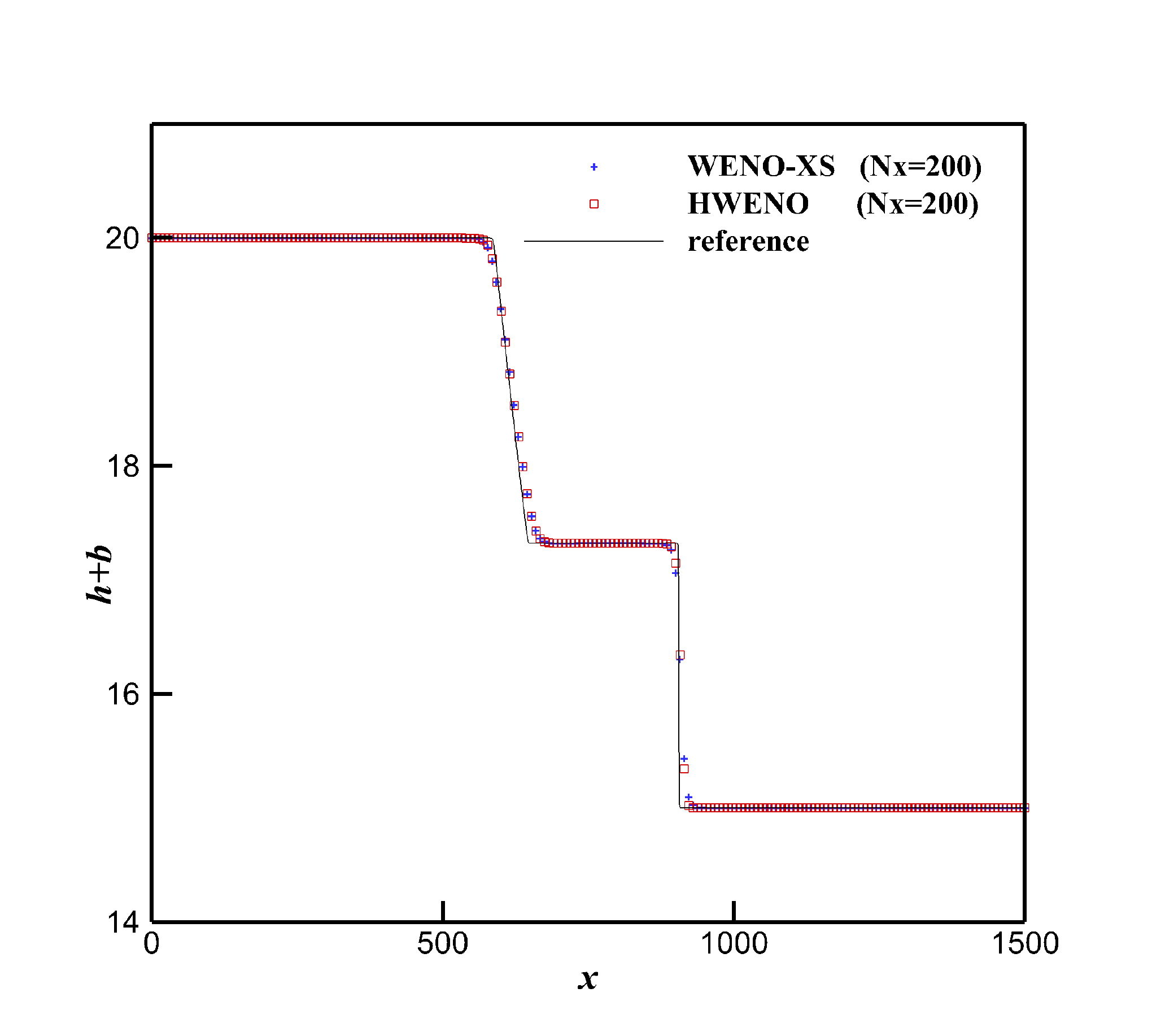}}
\subfigure[$hu$: $t=15$]{
\includegraphics[width=0.45\textwidth,trim=0 0 20 10,clip]{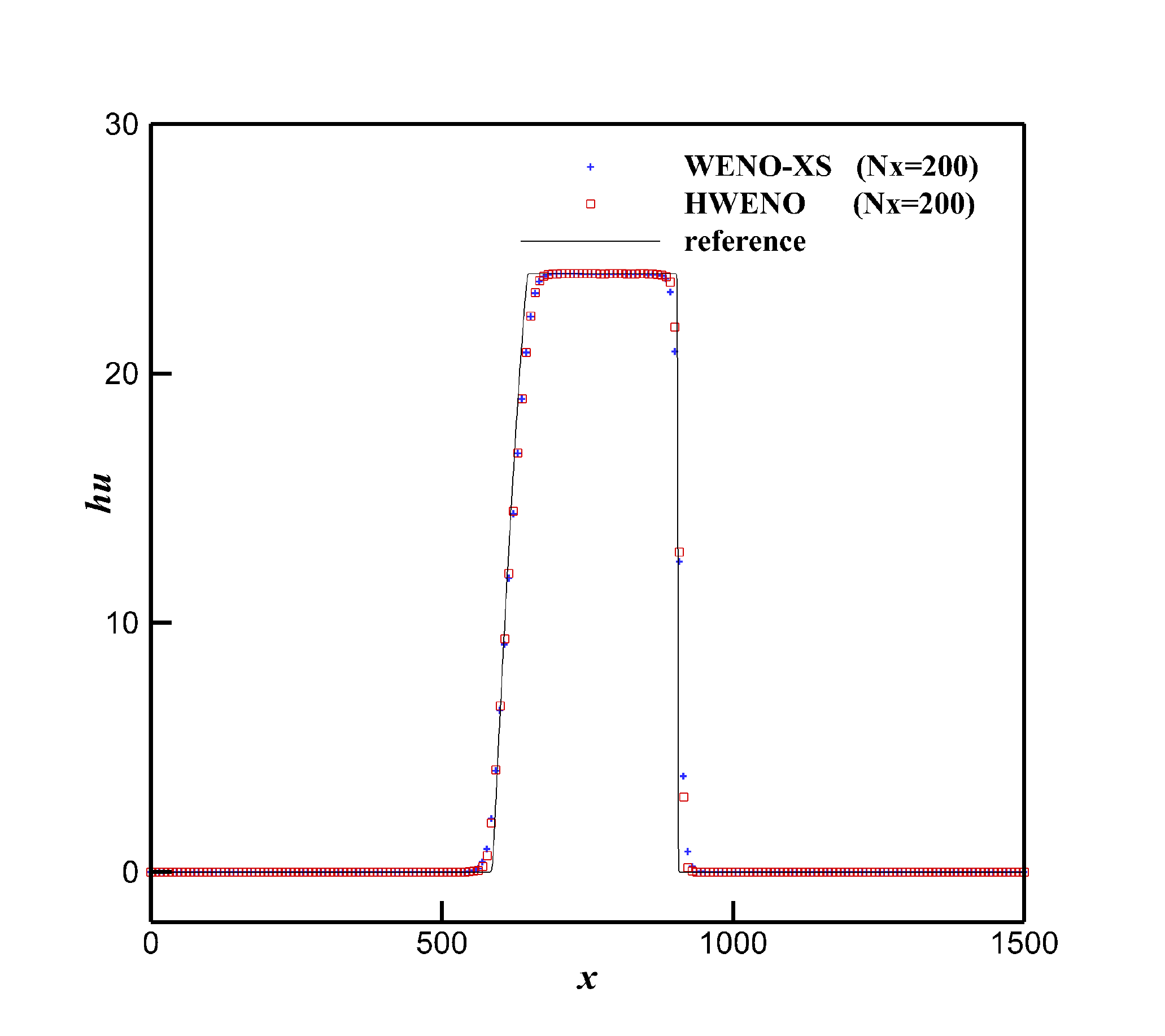}}
\subfigure[$h+b$: $t=60$]{
\includegraphics[width=0.45\textwidth,trim=0 0 20 10,clip]{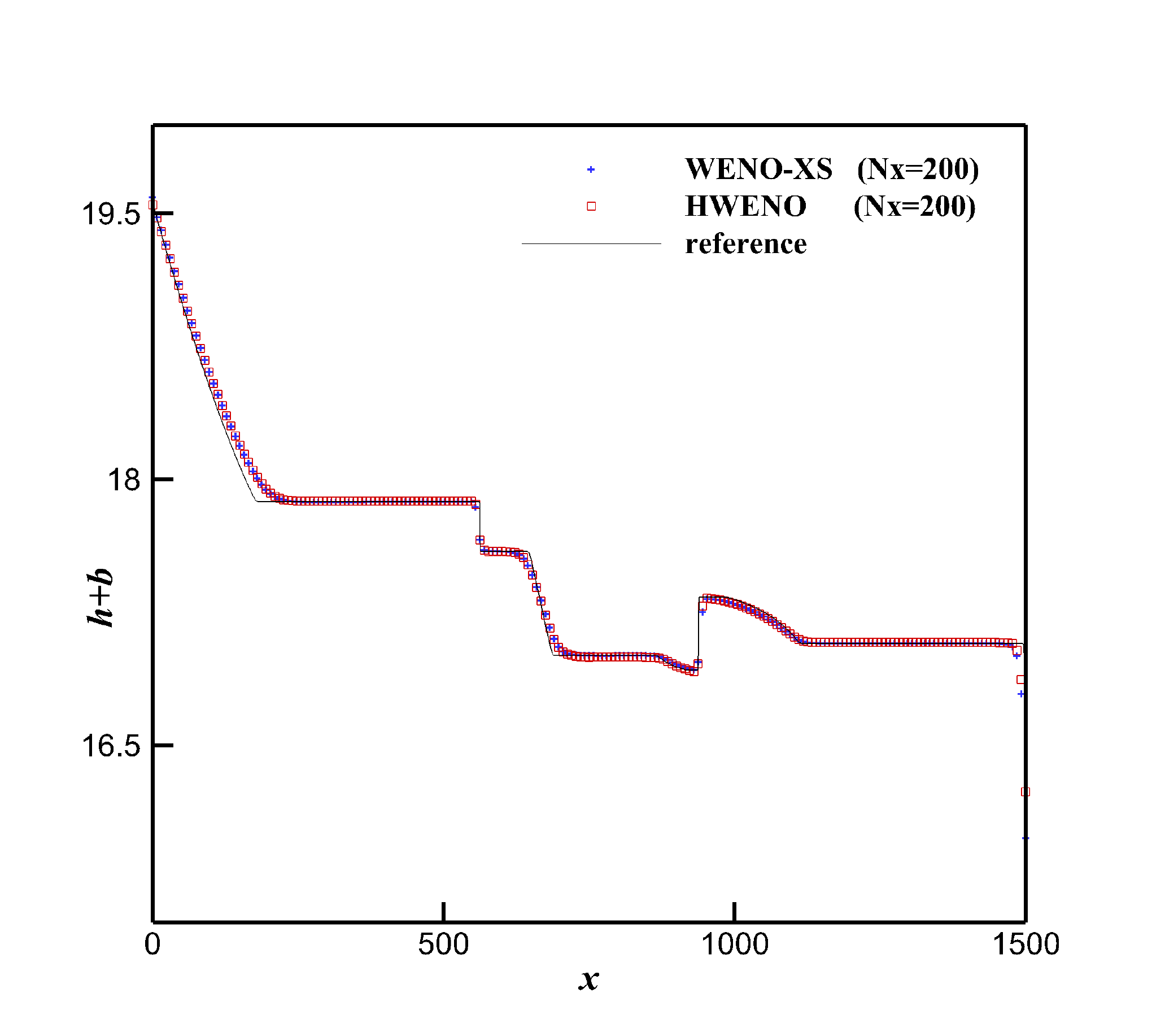}}
\subfigure[$hu$: $t=60$]{
\includegraphics[width=0.45\textwidth,trim=0 0 20 10,clip]{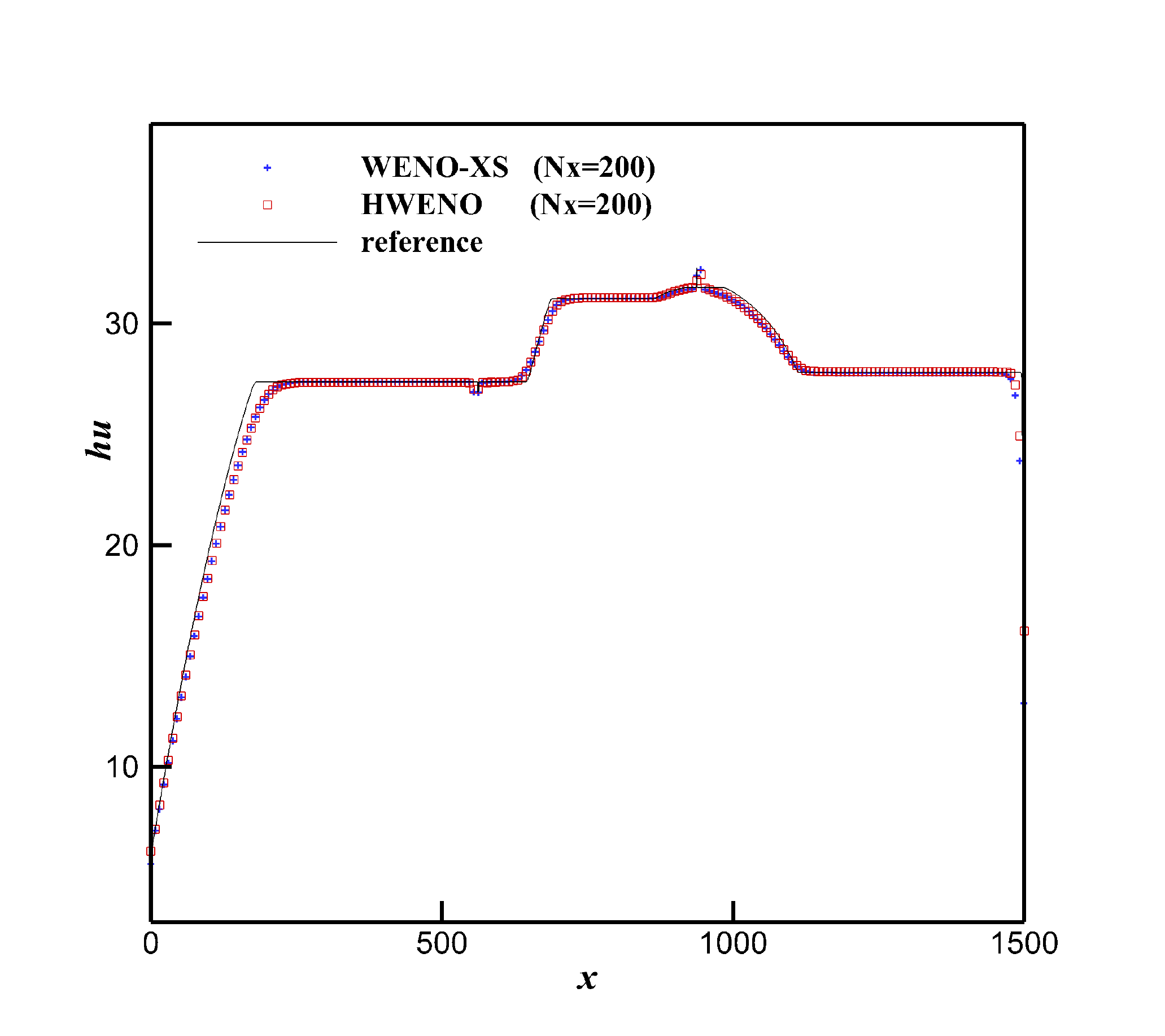}}
\caption{Example \ref{test7-1d}. The free water surface $h+b$ and discharge $hu$ at $t=15$ and $t=60$.}
\label{Fig:test7-1d}
\end{figure}

\begin{example}\label{test1-2d}
(The accuracy test for the 2D SWEs over a sinusoidal hump.)
\end{example}
This example is used to verify the fifth-order accuracy of the proposed HWENO scheme in two dimensions.
The bottom topography is
\begin{equation*}
b(x,y)=\sin(2\pi x)+\sin(2\pi y), \quad (x,y) \in [0,1]\times[0,1].
\end{equation*}
We use periodic boundary conditions for all unknown variables. The initial conditions are given as
\begin{align*}
&h(x,y,0)=10+e^{\sin(2\pi x)}\cos(2\pi y), \\
&hu(x,y,0)=\sin\big(\cos(2\pi x)\big) \sin(2\pi y),\\
&hv(x,y,0)=\cos(2\pi x)\cos\big(\sin(2\pi y)\big).
\end{align*}
We compute the solution up to $t=0.05$ when the solution is still smooth.
A reference solution is obtained using the fifth-order finite difference WENO-XS scheme \cite{Xing-Shu-2005JCP} with $N_x\times N_y =1600\times1600$, and treat this reference solution as the exact solution in computing the numerical errors.

The error of $L^1$ and $L^\infty$ norm of the proposed HWENO and WENO-XS schemes for the water depth $h$ and water discharges $hu$ and $hv$ are plotted in Fig.~\ref{Fig:test1-2d}.
One can be seen that the two schemes have the expected fifth-order convergence in both $L^1$ and $L^\infty$ norm. Again, the figures show that the HWENO scheme produces a smaller error than the WENO-XS scheme on the same number of elements.
Similarly as the one-dimensional case, in each direction, the proposed fifth-order HWENO scheme only needs a compact three-point stencil while the fifth-order WENO-XS scheme needs a five-point stencil in the reconstruction.
Thus, the HWENO scheme more accurate and compact than the WENO-XS scheme.

We also present the error of $L^1$ norm against the CPU time for the water depth $h$ and water discharges $hu$ and $hv$ in Fig.~\ref{Fig:test1-2d-cpu},
which illustrates that the HWENO scheme is slightly efficient than the WENO-XS scheme \cite{Xing-Shu-2005JCP} in the sense that the former leads to a smaller error for a fixed amount of the CPU time.

\begin{figure}[H]
\centering
\subfigure[$h$]{
\includegraphics[width=0.31\textwidth,trim=20 10 30 30,clip]{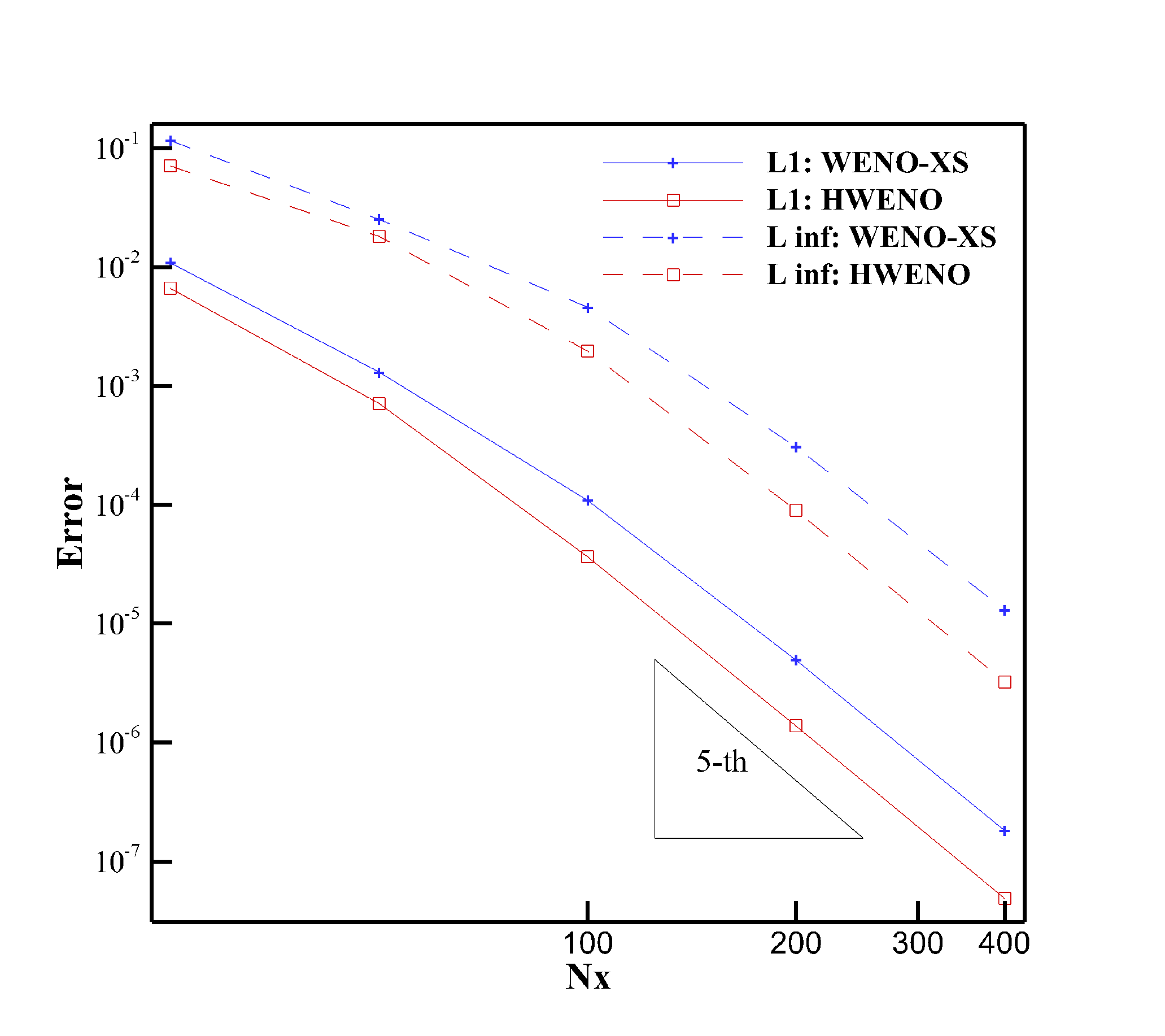}}
\subfigure[$hu$]{
\includegraphics[width=0.31\textwidth,trim=20 10 30 30,clip]{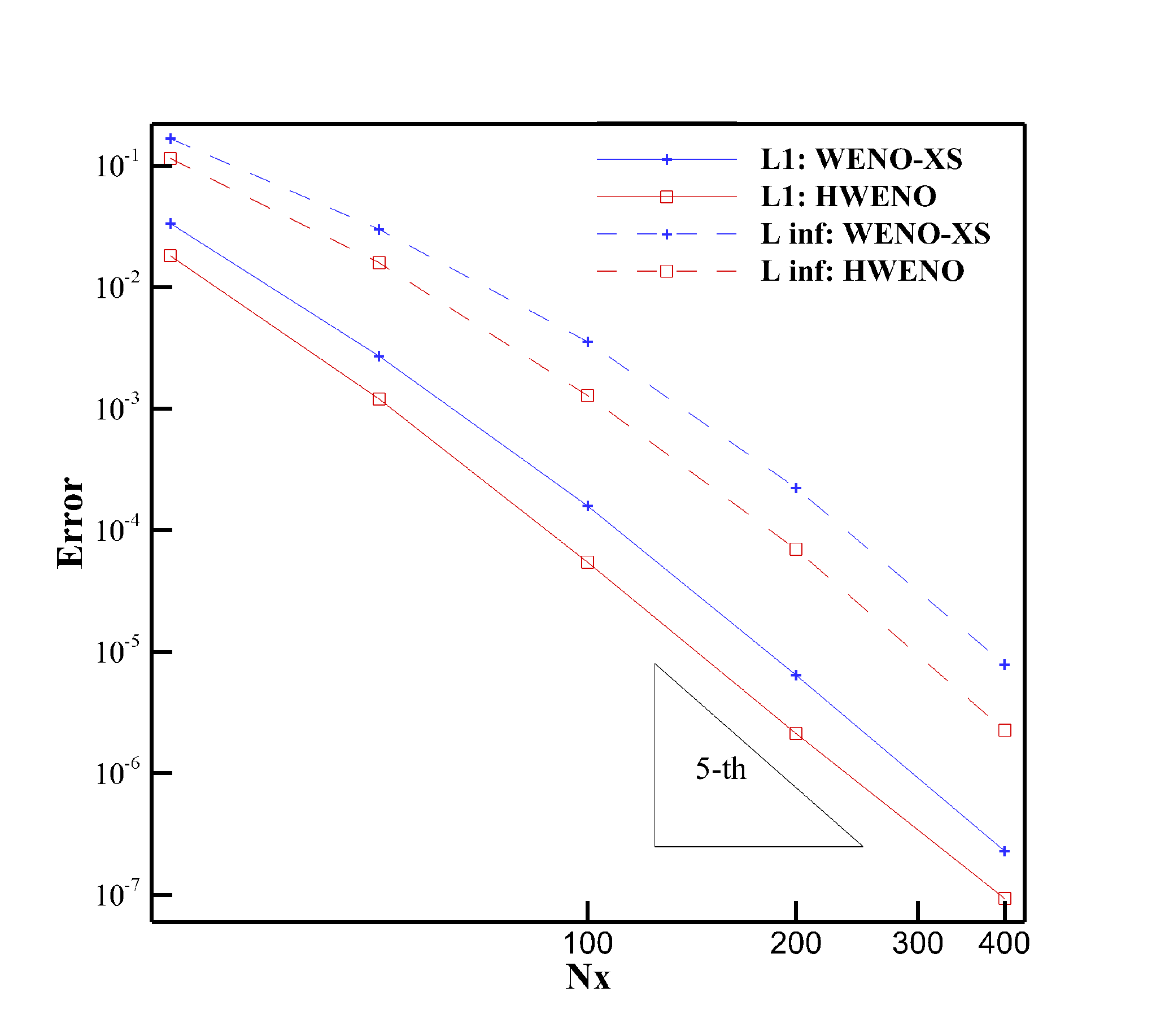}}
\subfigure[$hv$]{
\includegraphics[width=0.31\textwidth,trim=20 10 30 30,clip]{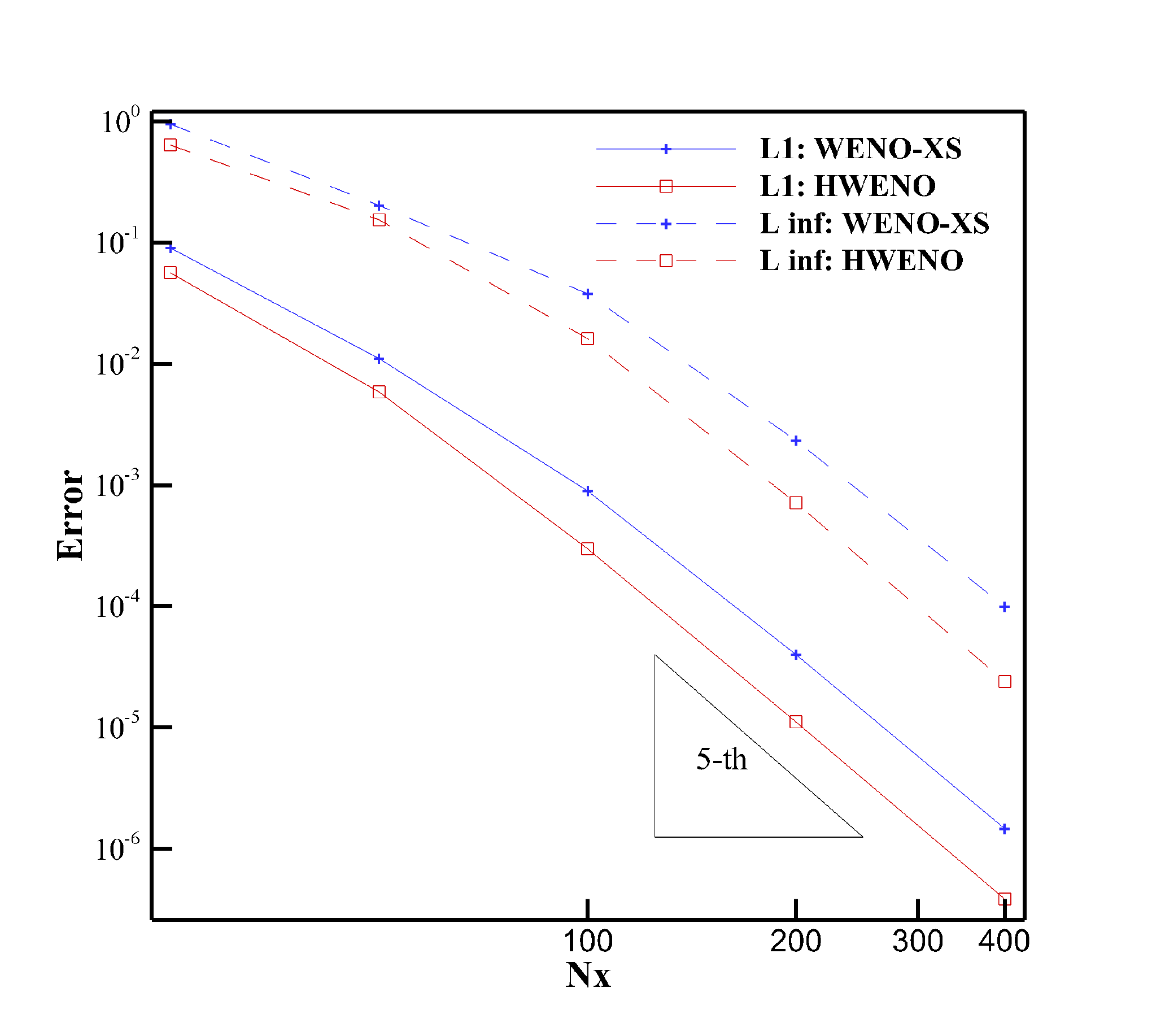}}
\caption{Example \ref{test1-2d}. The error of $L^1$ and $L^\infty$ norm for the water depth $h$ and water discharges $hu$ and $hv$.}
\label{Fig:test1-2d}
\end{figure}

\begin{figure}[H]
\subfigure[$h$]{
\includegraphics[width=0.30\textwidth,trim=20 10 30 30,clip]{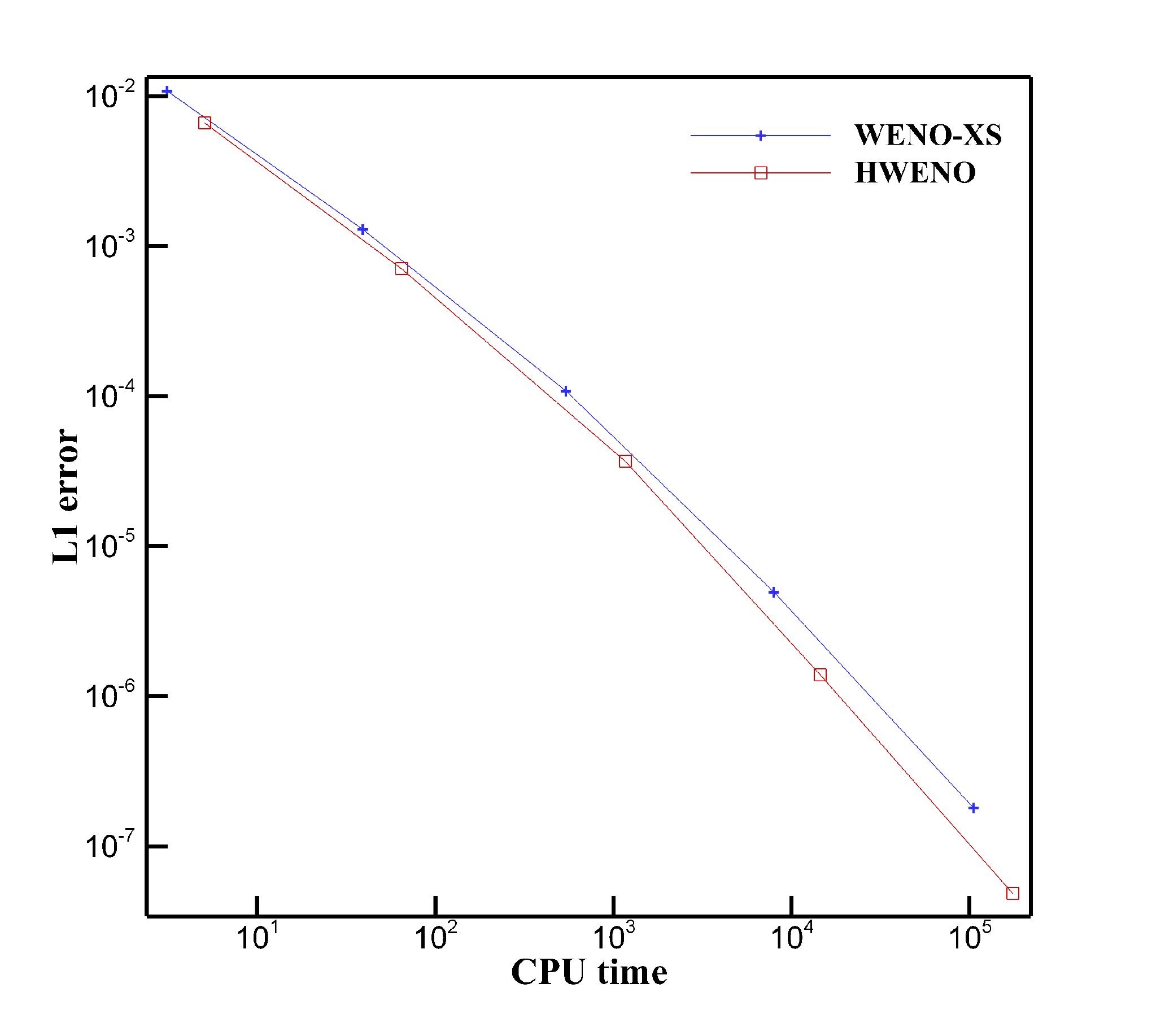}}
\subfigure[$hu$]{
\includegraphics[width=0.30\textwidth,trim=20 10 30 30,clip]{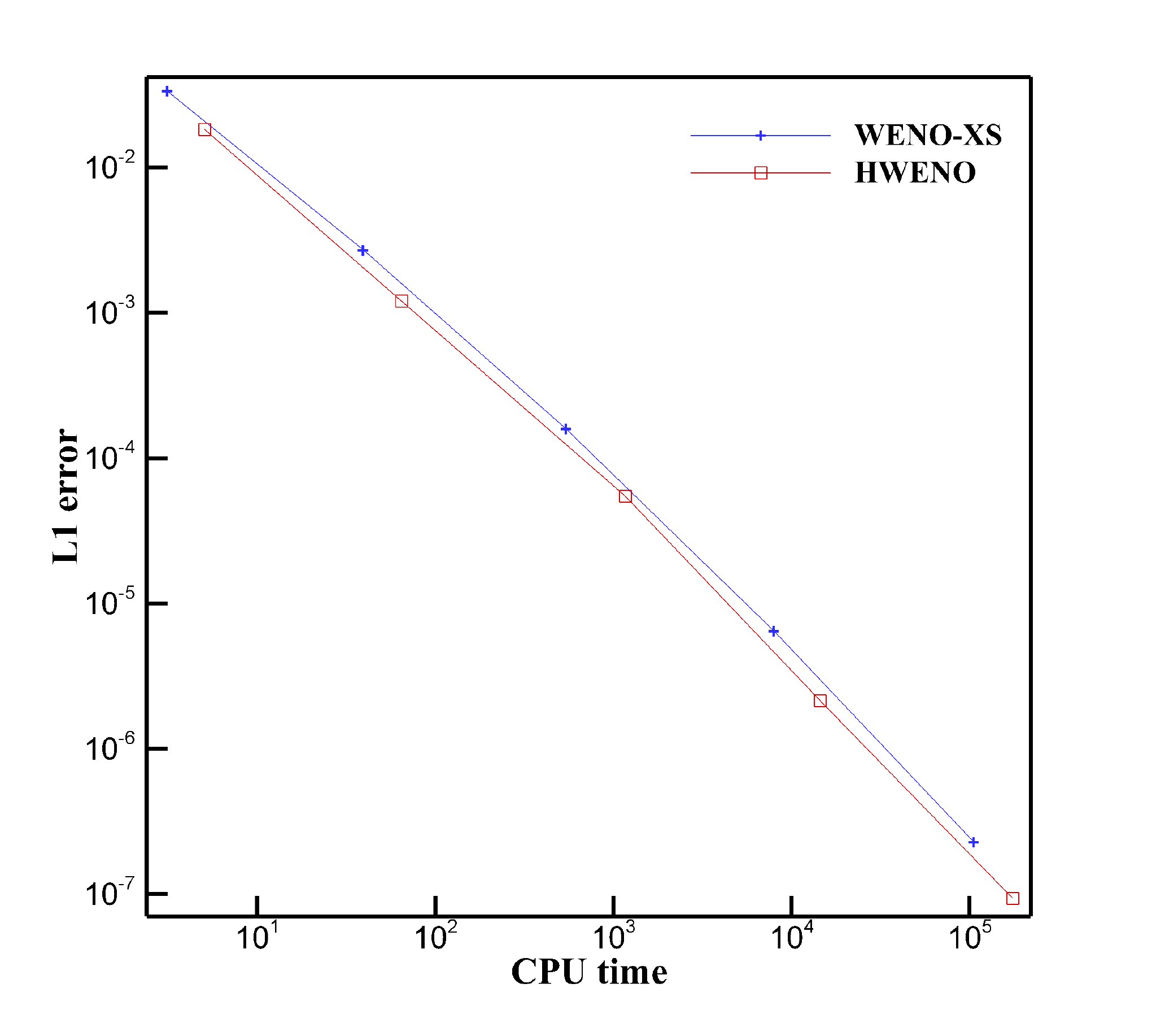}}
\subfigure[$hv$]{
\includegraphics[width=0.30\textwidth,trim=20 10 30 30,clip]{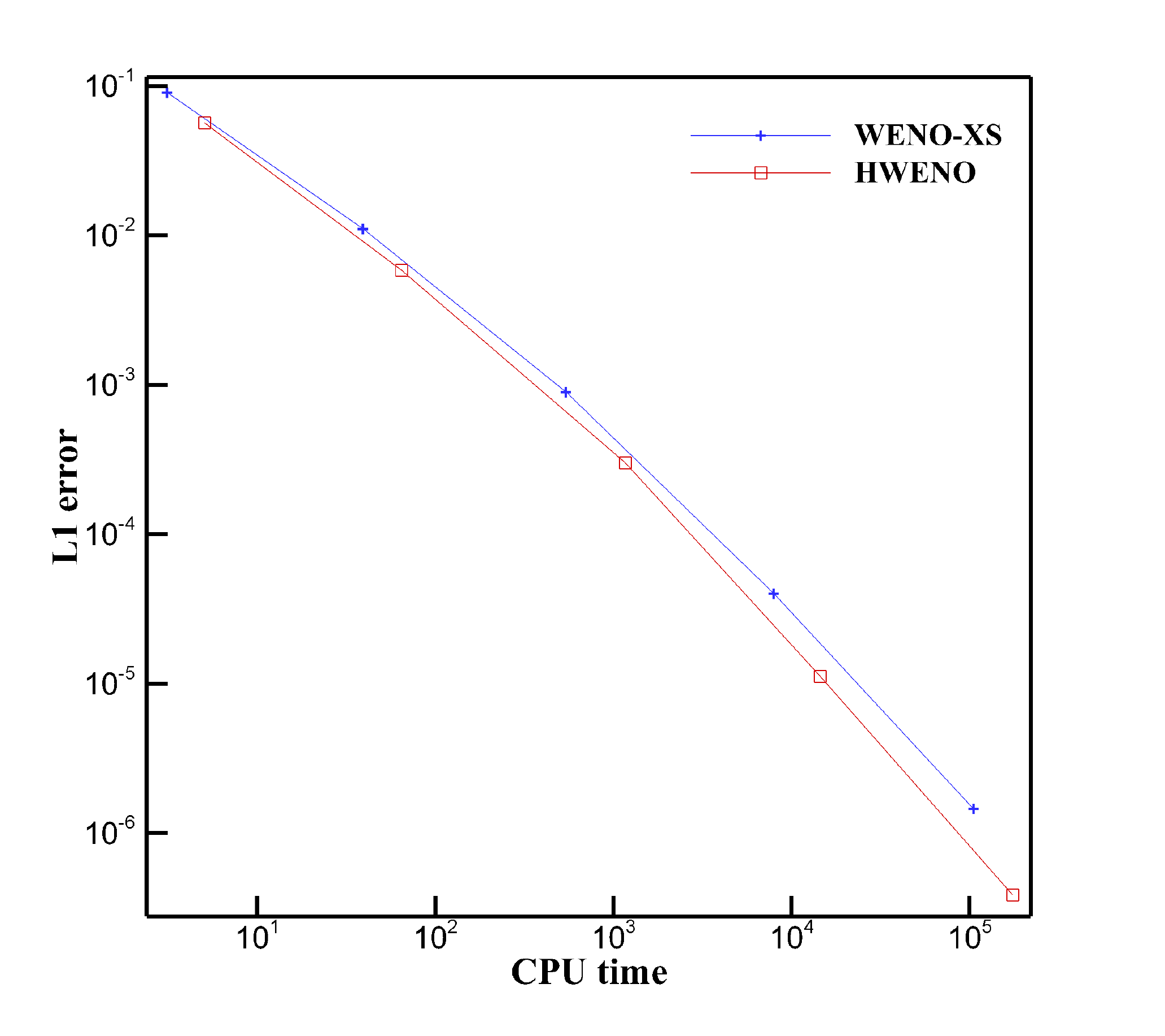}}
 \caption{Example \ref{test1-2d}. The error of $L^1$ norm against the CPU time.}
\label{Fig:test1-2d-cpu}
\end{figure}

\begin{example}\label{test2-2d}
(The lake-at-rest steady-state flow test for the 2D SWEs.)
\end{example}
We choose this example to verify the well-balance property of the proposed HWENO scheme over non-flat bottom topography in two dimensions. We take a bottom topography with an isolated elliptical-shaped bump as
\begin{equation}
\label{B-4}
b(x,y)=0.8e^{-50\big((x-0.5)^2+(y-0.5)^2\big)}, \quad (x, y) \in [0,1]\times[0,1].
\end{equation}
The initial depth of water and velocities are given by
\begin{equation*}
h(x,y,0)=1-b(x,y),
\quad u(x,y,0)=0,\quad  v(x,y,0)=0,
\end{equation*}
with the periodic boundary conditions. The still water state should be remained if the HWENO scheme is well-balanced.
We compute the solution up to $t=0.1$ using single, double and quadruple precisions with a mesh $200\times 200$ to show that the well-balance property is attained up to the level of round-off error.
The $L^1$ and $L^\infty$ error for $h+b$, $hu$, and $hv$ of the HWENO scheme are listed in Table~\ref{tab:test2-2d-error}.
It shows that the HWENO scheme maintains the lake-at-rest steady-state to the level of round-off error (single, double and quadruple precisions) in both $L^1$ and $L^\infty$ norm. Thus, the HWENO scheme is well-balanced.

\begin{table}[H]\small
\centering
\caption{Example \ref{test2-2d}. Well-balanced test over an isolated elliptical-shaped hump bottom topography \eqref{B-4}.}
\label{tab:test2-2d-error}
\medskip
\begin{tabular} {lllllll}
 \toprule
Precision    & \multicolumn{2}{c}{$h$} &   \multicolumn{2}{c}{$hu$} &  \multicolumn{2}{c}{$hv$}\\
\cline{2-3} \cline{4-5} \cline{6-7}
& $L^1$ error & $L^\infty$ error&$L^1$ error& $L^\infty$ error &$L^1$ error& $L^\infty$ error\\
\midrule
Single &  1.10E-05 &     1.26E-05 &     1.52E-06 &     4.27E-06 &     4.19E-07 &     2.37E-06\\
Double &    1.81E-16 &     1.11E-15 &     6.80E-16 &     3.89E-15 &     6.83E-16 &     3.73E-15\\
Quadruple&    1.50E-34 &     9.63E-34 &     6.23E-34 &     3.91E-33 &     6.18E-34 &     3.37E-33\\
\bottomrule
\end{tabular}
\end{table}

\begin{example}\label{test4-2d}
(The perturbed lake-at-rest steady-state flow test for the 2D SWEs.)
\end{example}
We use this example to demonstrate the ability
of capturing  small perturbations over the lake-at-rest water surface for the proposed well-balanced HWENO scheme.
The bottom topography is an isolated elliptical-shaped hump,
\begin{equation*}
b(x,y)=0.8e^{-5(x-0.9)^2-50(y-0.5)^2}, \quad (x,y) \in [0,2]\times[0,1].
\end{equation*}
The initial depth of water and velocities are given by
\begin{equation*}
\begin{split}
&h(x,y,0)=\begin{cases}
1-b(x,y)+0.01,& \hbox{$x\in[0.05 , 0.15],$}\\
1-b(x,y),&\hbox{otherwise,}\\
\end{cases}
\\&u(x,y,0)=0,\quad \quad v(x,y,0)=0.
\end{split}
\end{equation*}
The initial perturbation splits into two waves propagating left and right
at the characteristic speeds $\pm \sqrt{gh}$.

The contours of the free water surface level $h+b$ at $t=0.12,~0.24,~0.36,~0.48,~0.60$ obtained by the proposed HWENO scheme with the meshes $200\times 100$ and $600\times 300$, respectively, are shown in
Fig.~\ref{Fig:test4-2d}. $30$ uniformly spaced contour lines and the same ranges as the WENO-XS scheme \cite{Xing-Shu-2006JCP}, i.e., at time $t = 0.12$ from $0.999703$ to $1.00629$; at time $t = 0.24$ from $0.994836$ to $1.01604$; at time $t = 0.36$ from $0.988582$ to $1.0117$; at time $t = 0.48$ from $0.990344$ to $1.00497$; and at time $t = 0.60$ from $0.995065$ to $1.0056$.
These results show that the HWENO scheme can resolve the complex small features of the flow over a smooth bed very well as WENO-XS scheme.
\begin{figure}
\centering
\subfigure[$N_x\times N_y=200 \times 100$, $t=0.12$]{
\includegraphics[width=0.45\textwidth,trim=20 10 30 30,clip]{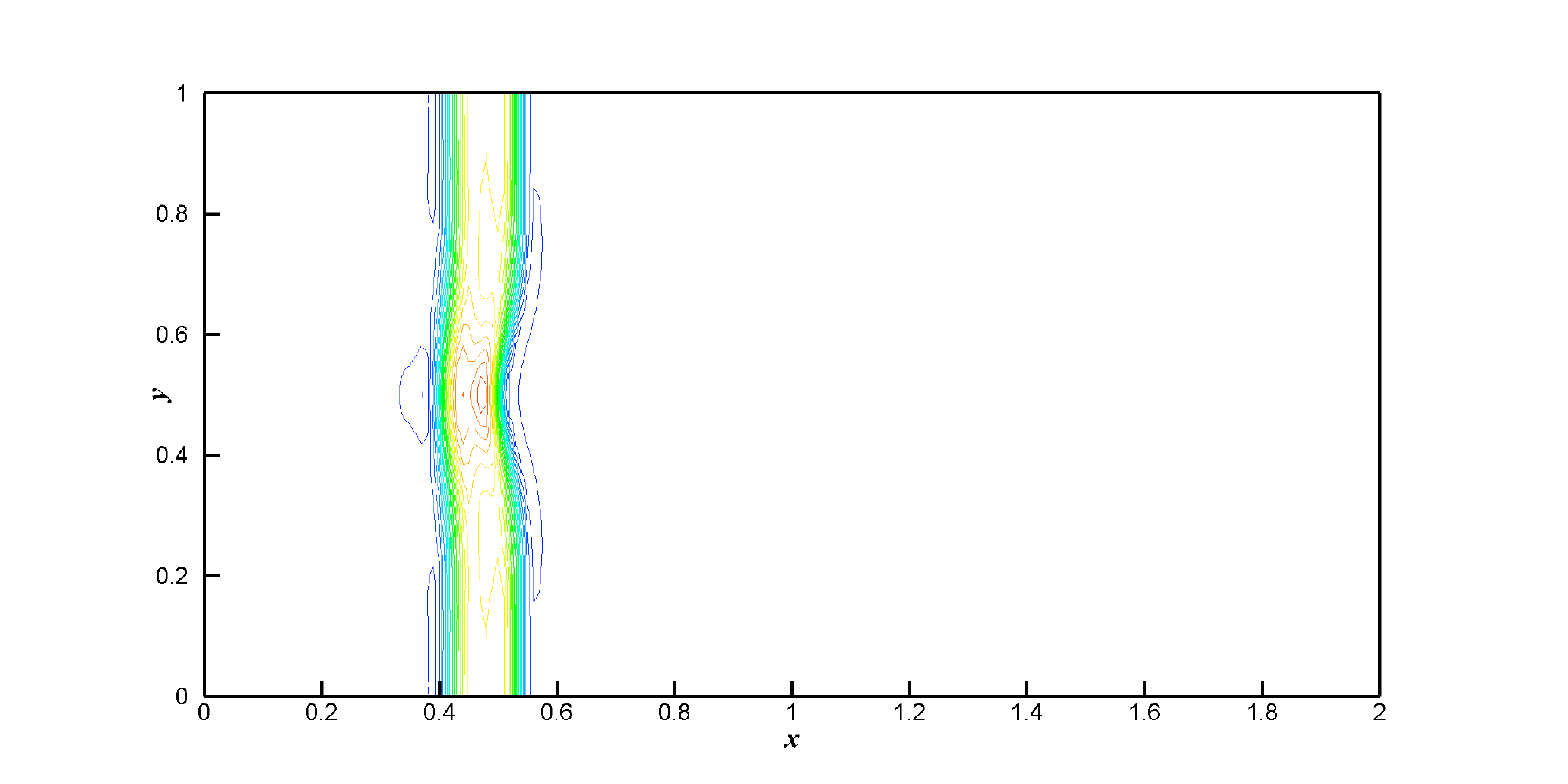}}
\subfigure[$N_x\times N_y=600 \times 300$, $t=0.12$]{
\includegraphics[width=0.45\textwidth,trim=20 10 30 30,clip]{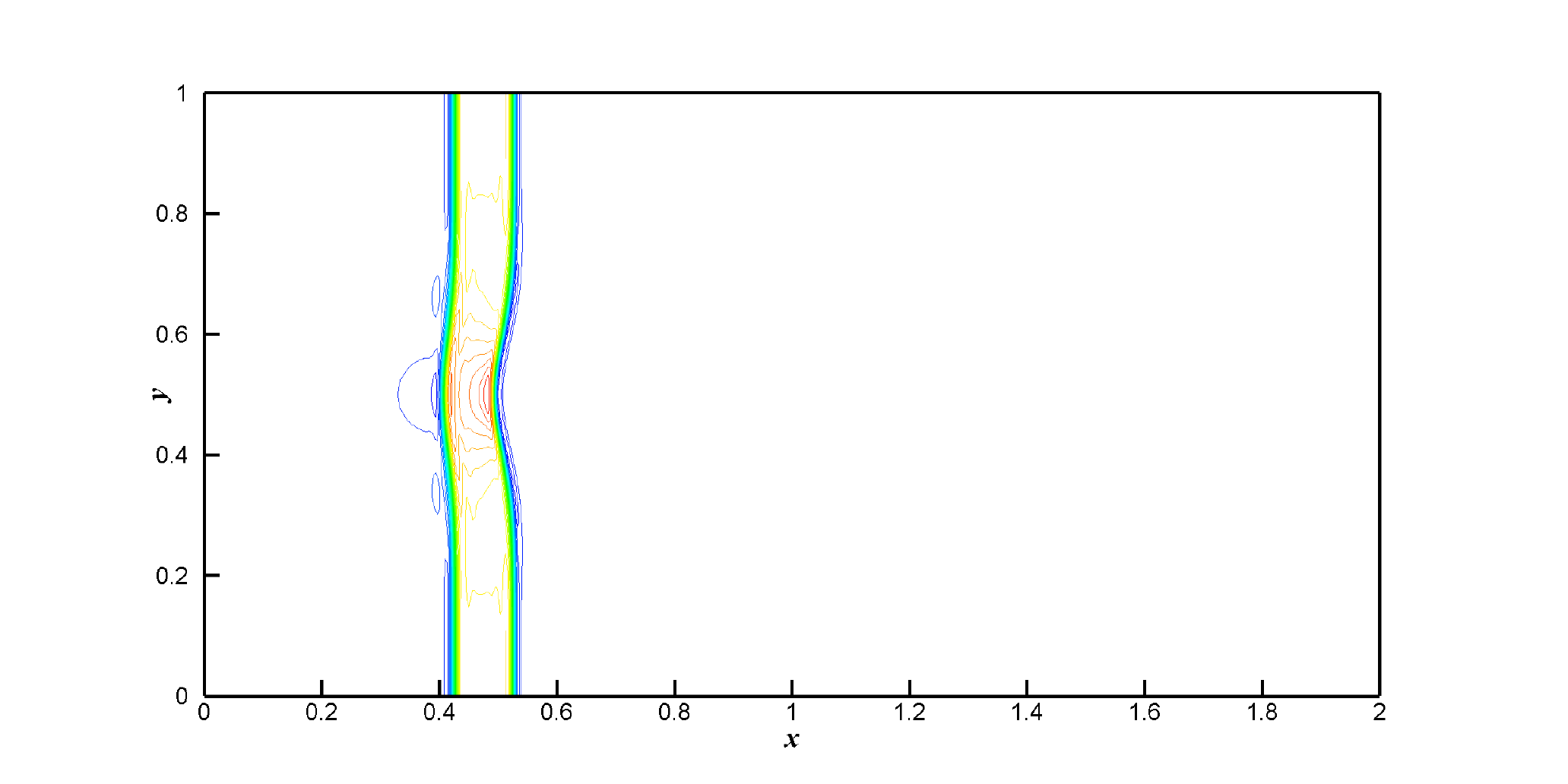}}
\subfigure[$N_x\times N_y=200 \times 100$, $t=0.24$]{
\includegraphics[width=0.45\textwidth,trim=20 10 30 30,clip]{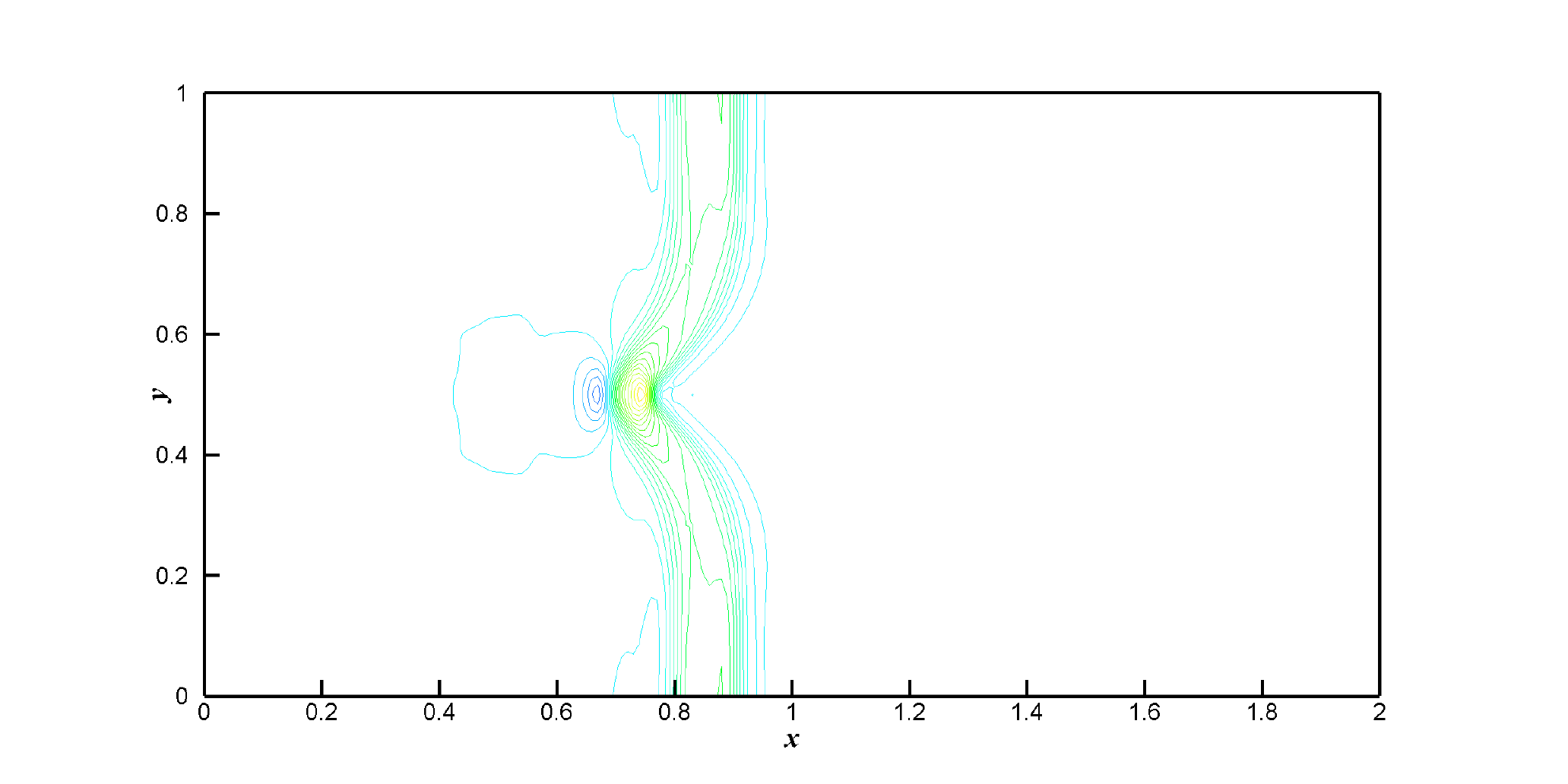}}
\subfigure[$N_x\times N_y=600 \times 300$, $t=0.24$]{
\includegraphics[width=0.45\textwidth,trim=20 10 30 30,clip]{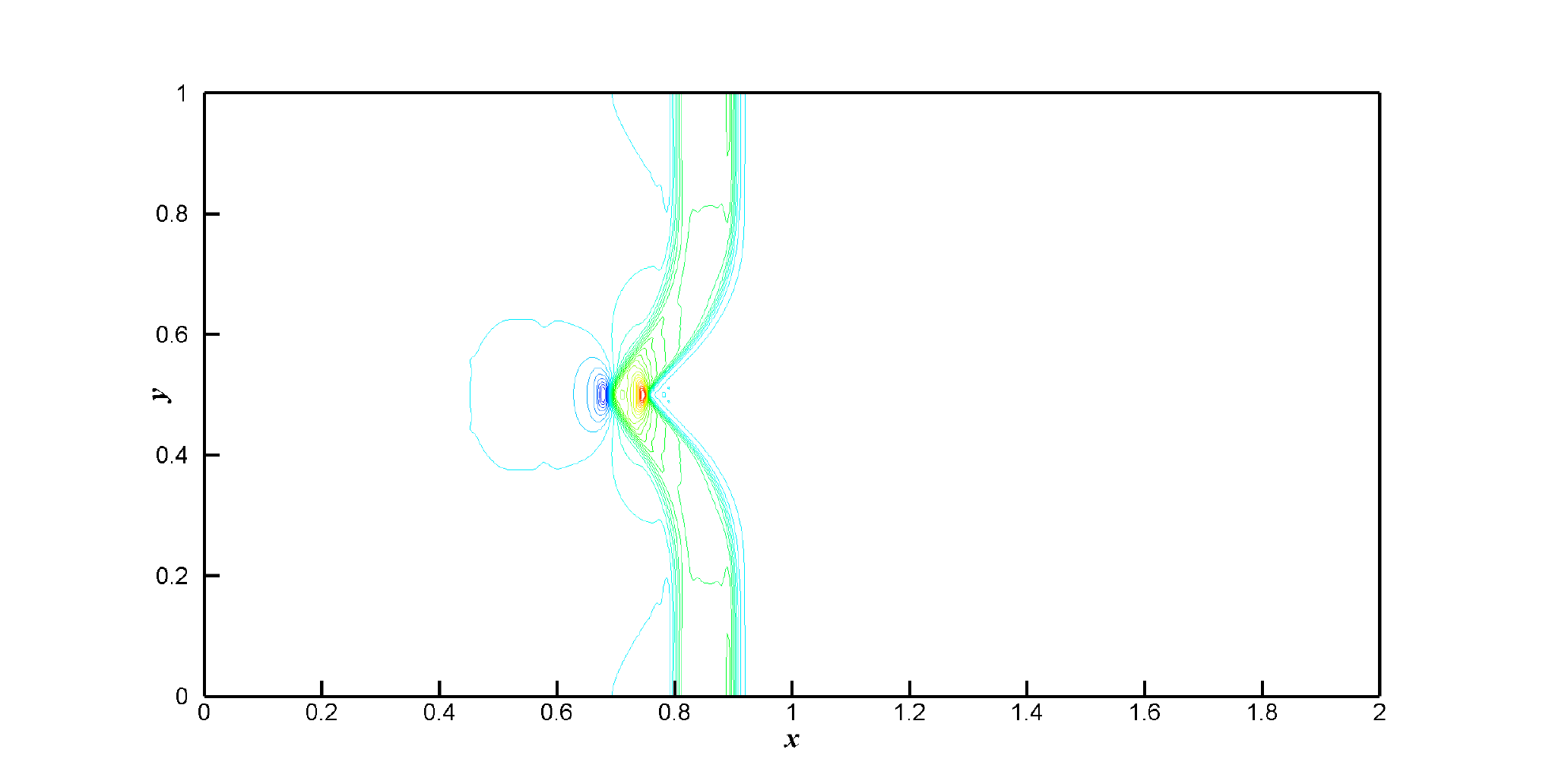}}
\subfigure[$N_x\times N_y=200 \times 100$, $t=0.36$]{
\includegraphics[width=0.45\textwidth,trim=20 10 30 30,clip]{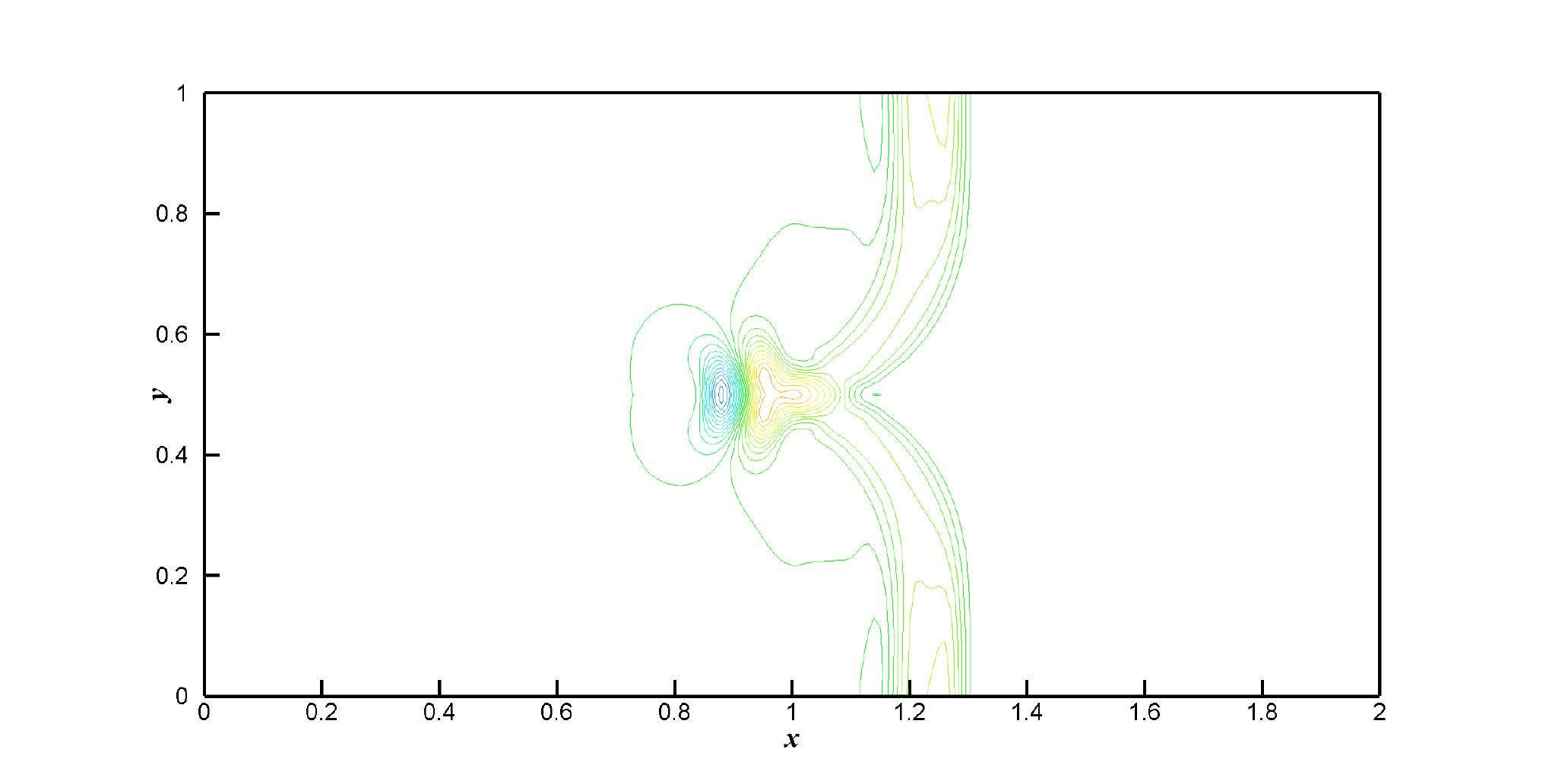}}
\subfigure[$N_x\times N_y=600 \times 300$, $t=0.36$]{
\includegraphics[width=0.45\textwidth,trim=20 10 30 30,clip]{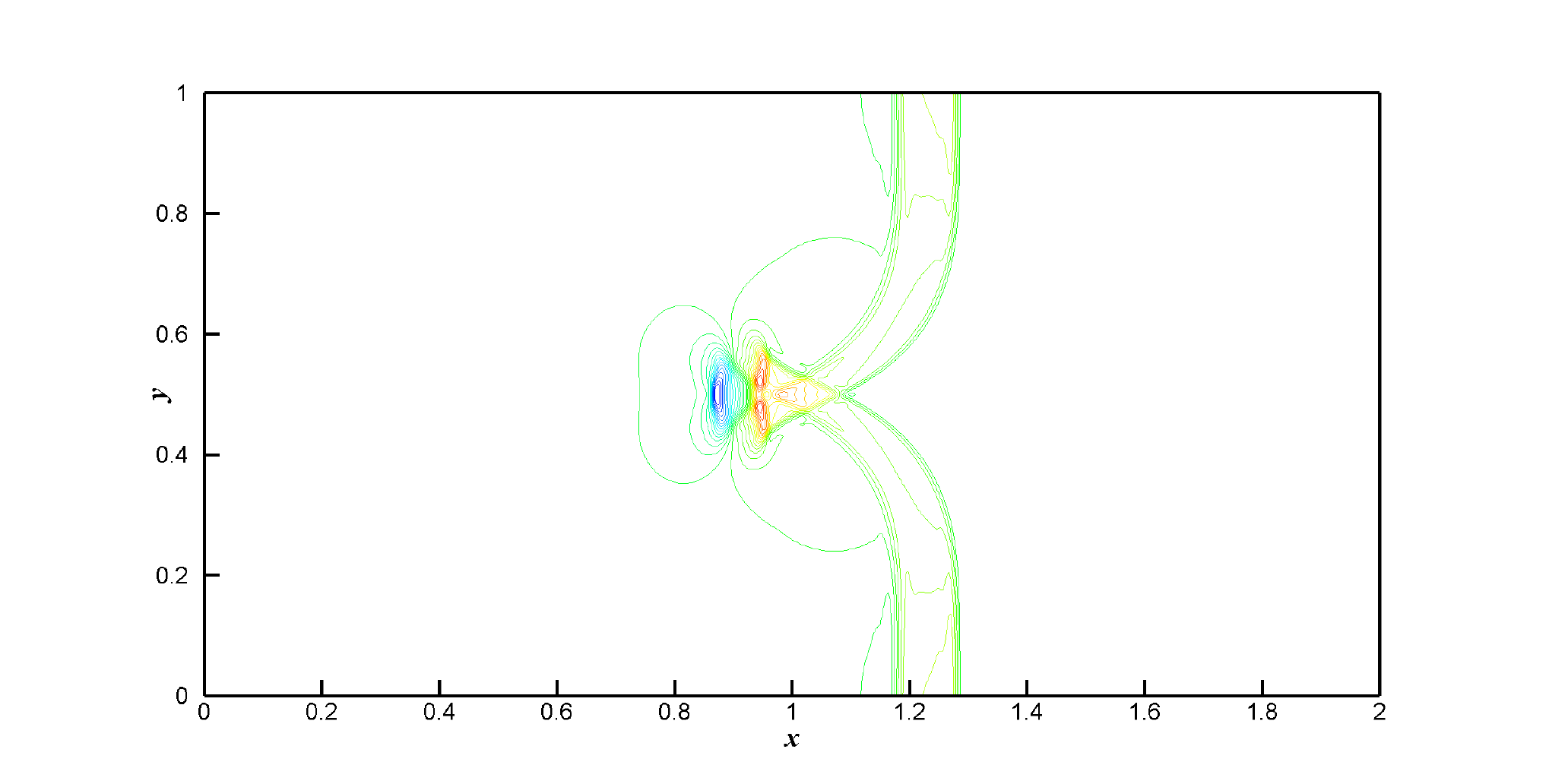}}
\subfigure[$N_x\times N_y=200 \times 100$, $t=0.48$]{
\includegraphics[width=0.45\textwidth,trim=20 10 30 30,clip]{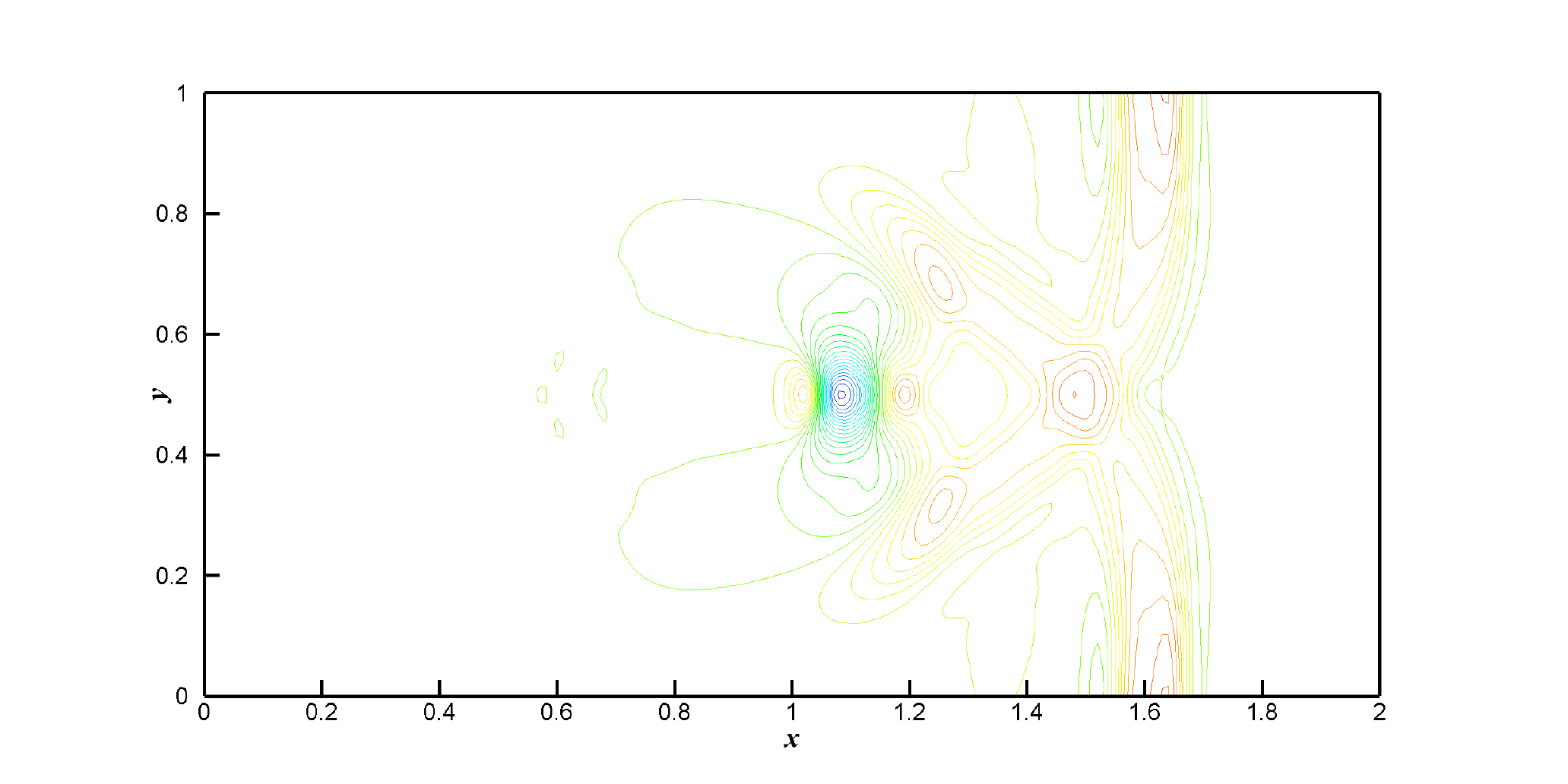}}
\subfigure[$N_x\times N_y=600 \times 300$, $t=0.48$]{
\includegraphics[width=0.45\textwidth,trim=20 10 30 30,clip]{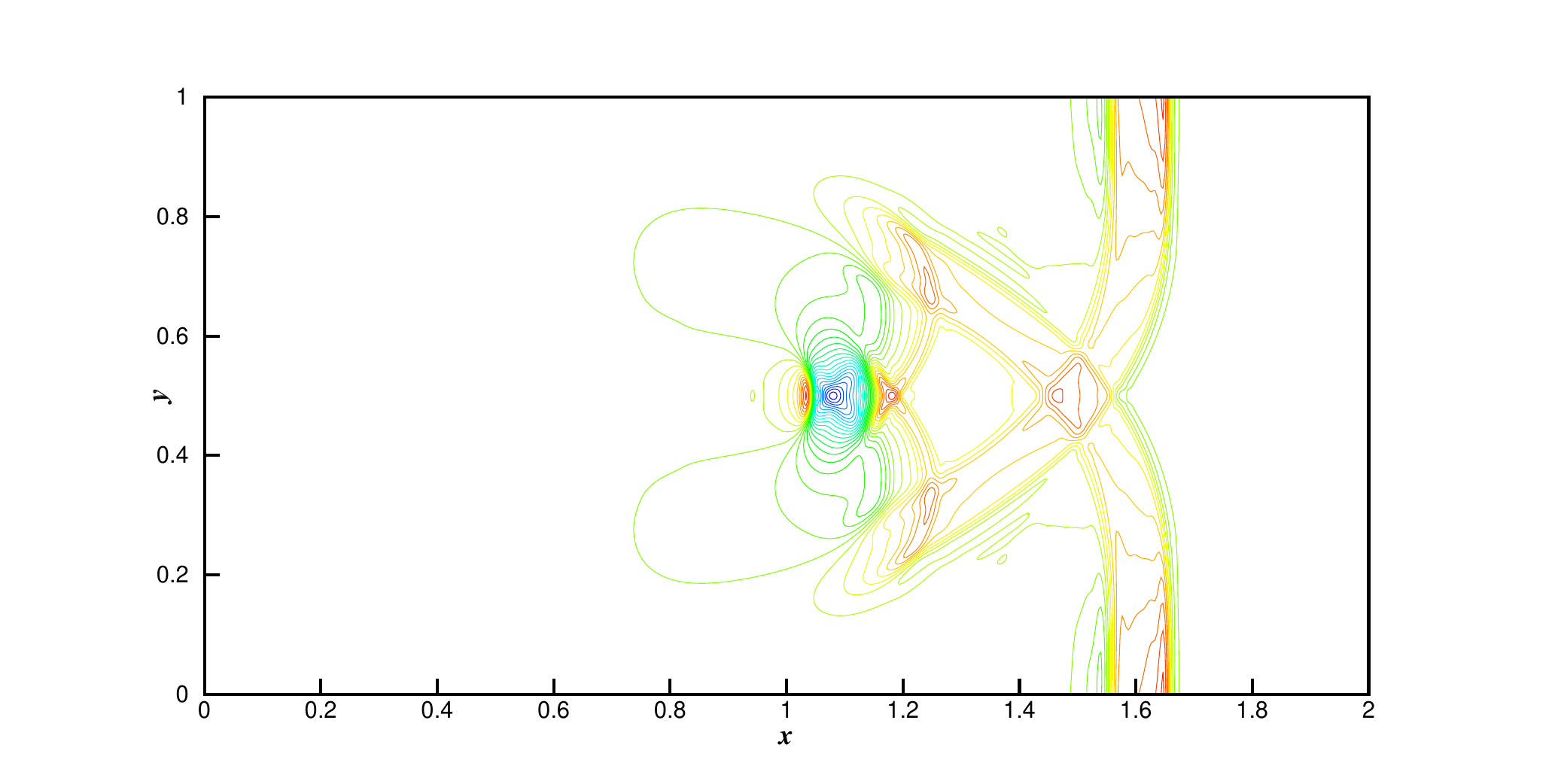}}
\subfigure[$N_x\times N_y=200 \times 100$, $t=0.60$]{
\includegraphics[width=0.45\textwidth,trim=20 10 30 30,clip]{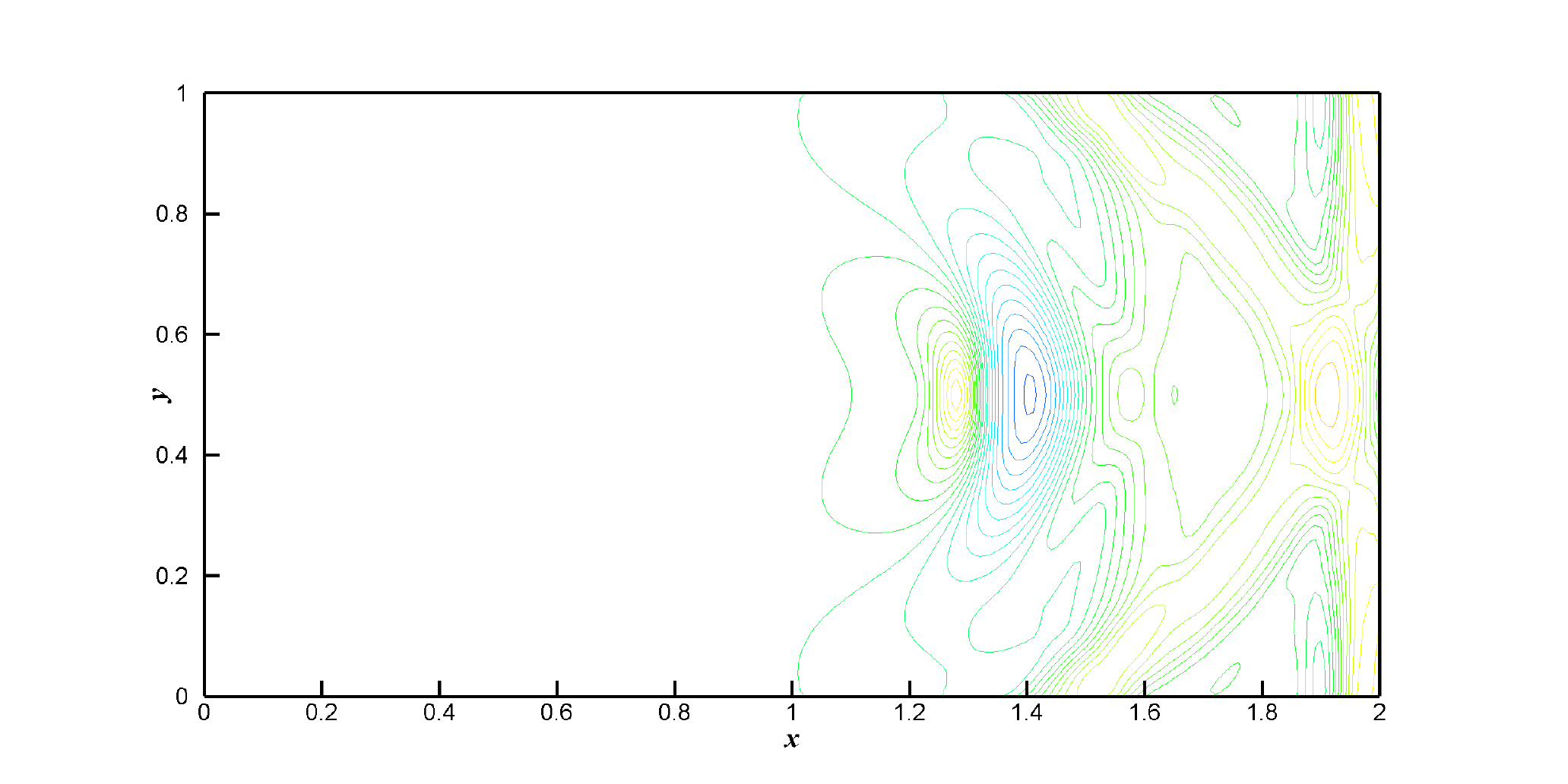}}
\subfigure[$N_x\times N_y=600 \times 300$, $t=0.60$]{
\includegraphics[width=0.45\textwidth,trim=20 10 30 30,clip]{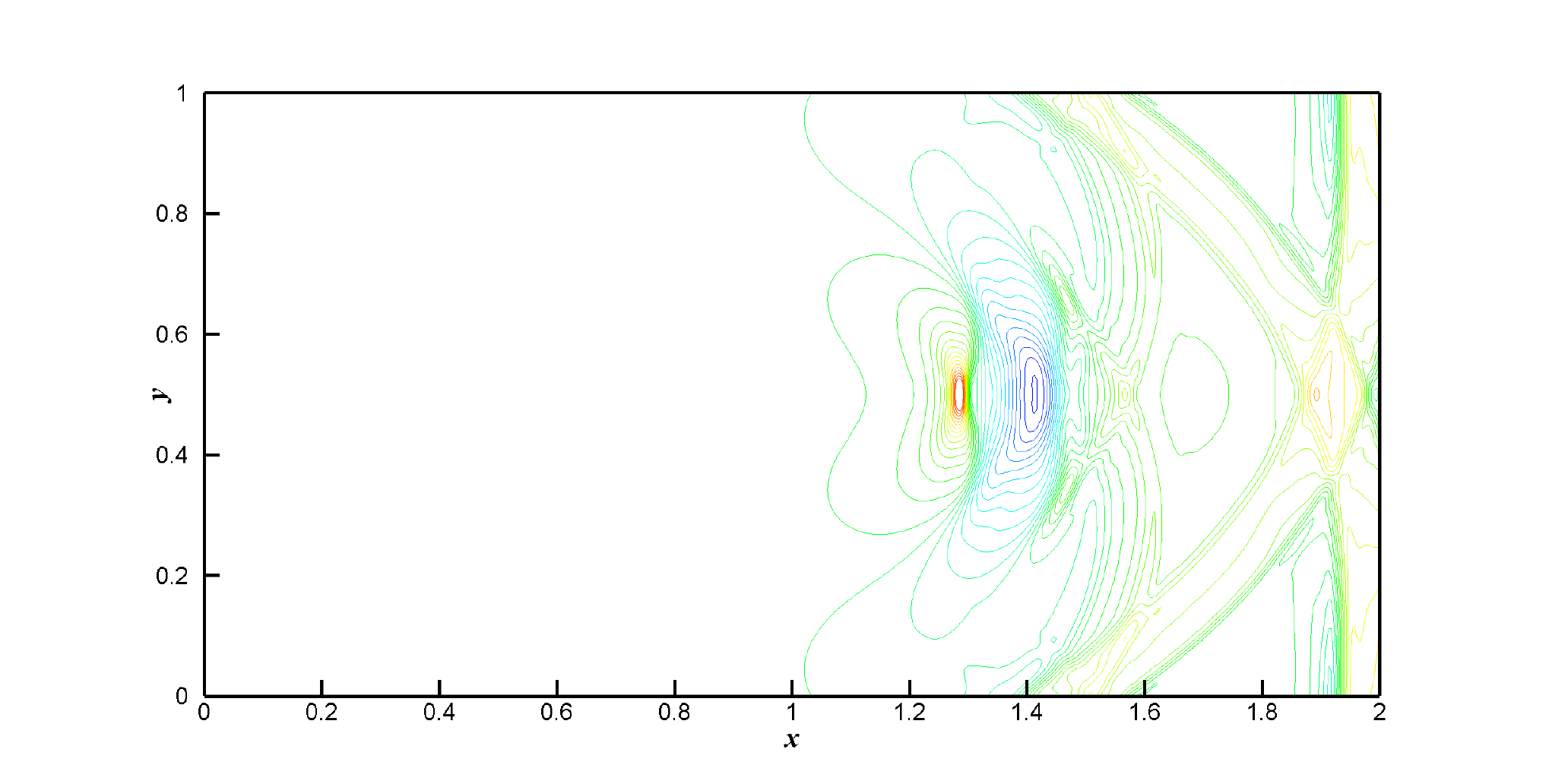}}
 \caption{Example \ref{test4-2d}. The contours of the free water surface level $h+b$ at different time obtained by the proposed HWENO scheme. $30$ uniformly spaced contour lines and the same ranges as the WENO-XS scheme \cite{Xing-Shu-2006JCP}.}
\label{Fig:test4-2d}
\end{figure}

\begin{example}\label{test5-2d}
(The circular Dam bread problem for the 2D SWEs.)
\end{example}
In this example we simulate the circular Dam bread problem \cite{Capilla-Balaguer-2013,Wang-etal-2020} for the 2D SWEs. The computational domain is $[0, 2]\times[0,2]$ and the bottom topography is given by
\begin{equation*}
b(x,y)=\begin{cases}
\frac{1}{8}(\cos(2\pi(x-0.5))+1)(\cos(2\pi y)+1), \ & \sqrt{(x-1.5)^2+(y-1)^2}\leq 0.5,\\
0,\ & \hbox{otherwise.}
\end{cases}
\end{equation*}
The initial conditions are
\begin{equation*}
\begin{split}
&h(x,y,0)=\begin{cases}
1.1-b(x,y), \quad & \sqrt{(x-1.25)^2+(y-1)^2}\leq 0.1,\\
0.6-b(x,y),\quad & \hbox{otherwise,}
\end{cases}
\\&u(x,y,0)=0,\quad \quad v(x,y,0)=0.
\end{split}
\end{equation*}

The final time is $t = 0.15$. The free water surface level $h+b$ at $t=0.15$ obtained by HWENO scheme with a mesh $200\times 200$ and the cut of the corresponding result along the line $y=1$ are plotted in Fig.~\ref{Fig:test5-2d}. We can clearly see that the HWENO scheme works well, giving well resolved and non-oscillatory solution.

\begin{figure}[H]
\centering
\subfigure[free water surface]{
\includegraphics[width=0.45\textwidth,trim=0 0 20 10,clip]{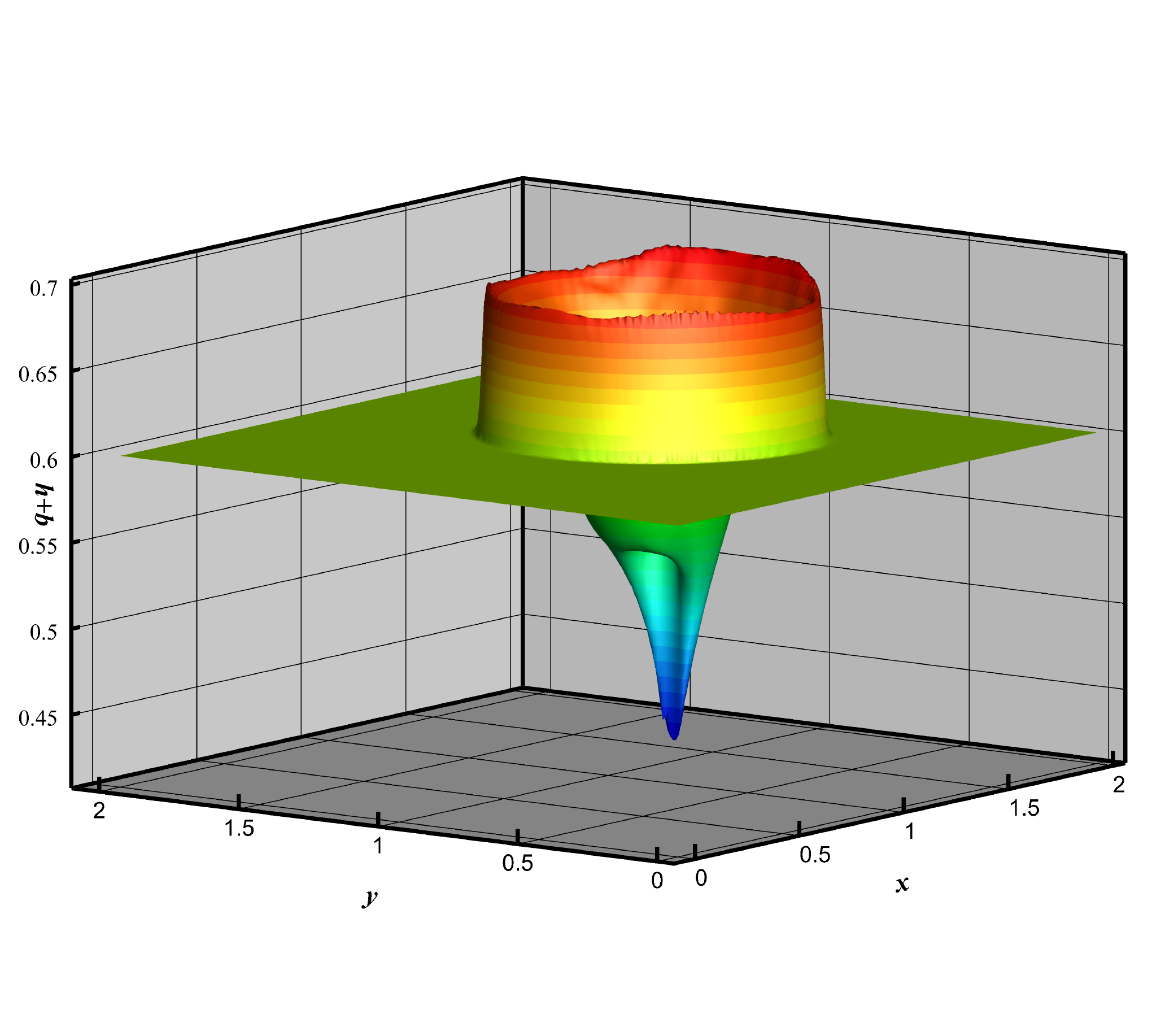}}
\subfigure[cut along $y=1$]{
\includegraphics[width=0.45\textwidth,trim=0 0 20 10,clip]{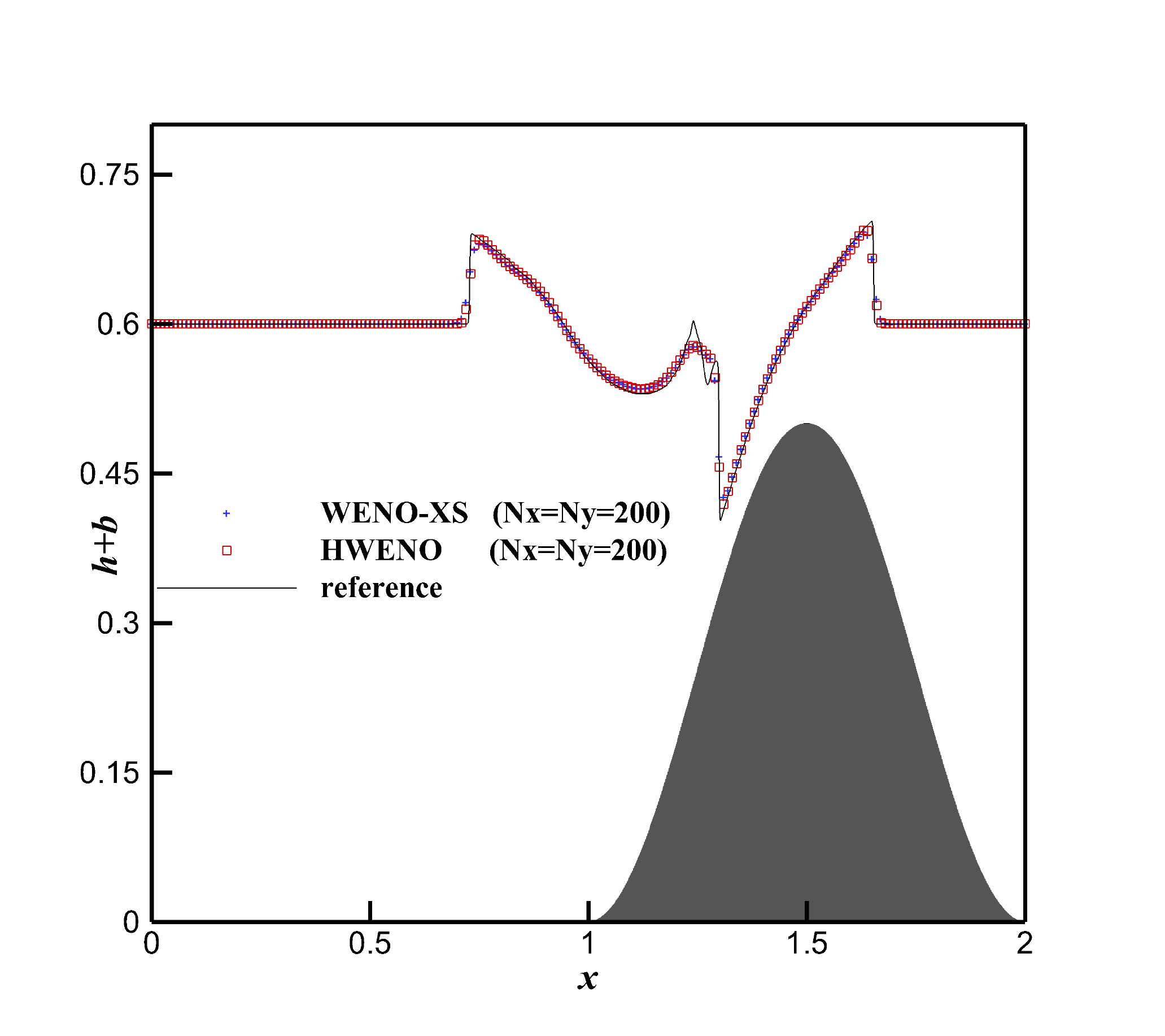}}
 \caption{Example \ref{test5-2d}. The free water surface level $h+b$ at $t=0.15$ obtained by the proposed HWENO scheme with $N_x\times N_y = 200\times 200$ and the cut of the corresponding results along the line $y=1$. }
\label{Fig:test5-2d}
\end{figure}

\section{Conclusions}
\label{sec:conclusions}

In this paper, we constructed a well-balanced fifth-order finite difference Hermite WENO (HWENO) scheme to solve the one- and two-dimensional shallow water equations with non-flat bottom topography, where both the function value and the derivative of
the equilibrium variable are evolved in time to make the scheme more compact.
Here, the similar idea of the WENO-XS scheme \cite{Xing-Shu-2005JCP} is used to achieve the well-balance property by balancing the flux gradients and the source terms.
Meanwhile, to control spurious oscillations, an HWENO limiter is applied for the derivatives of equilibrium variables in the time discretization step, which does not affect the well-balance property firstly.
And the HWENO limiter is vital for the stability of the proposed HWENO scheme, where lacking this limiter would lead to obvious oscillations near discontinuities, which has been shown in \cite{ZhaoZhuang-2020JSC-FD} for hyperbolic conservation laws. In addition, the proposed fifth-order HWENO scheme only needs a compact three-point stencil while the same order WENO-XS scheme \cite{Xing-Shu-2005JCP} needs a five-point stencil in the reconstruction.

Various benchmark examples in one and two dimensions are given to demonstrate the HWENO scheme has the properties of well-balance, fifth-order accuracy, non-oscillation, and high resolution.
The numerical results also show that the HWENO scheme is more accurate and efficient than the WENO-XS scheme.

\end{document}